\documentclass[leqno]{amsart}
\usepackage{latexsym,amsmath,amsfonts,amssymb,amsthm}

\theoremstyle{definition}

\newtheorem{exmp}{Example}

\theoremstyle{plain}
\newtheorem{thm}{Theorem}
\newtheorem{prop}{Proposition}
\newtheorem{lem}{Lemma}
\newtheorem{cor}{Corollary}

\theoremstyle{remark}
\newtheorem*{rem}{Remark}

\newcounter{claim}

\DeclareMathOperator{\im}{Im}

\title[Holomorphic mappings of once-holed tori]
{On sets of marked once-holed tori allowing \\
holomorphic mappings into \\
Riemann surfaces with marked handle}
\author{Makoto {\sc Masumoto}}
\address{Department of Mathematics, Yamaguchi University, 753-8512, Japan}
\email{masumoto@yamaguchi-u.ac.jp}
\thanks{This research is supported in part by JSPS KAKENHI Grant Numbers 
26400140 and 15K04930.}
\date{}
\keywords{Riemann surface, once-holed torus, holomorphic mapping, conformal 
mapping, extremal length, hyperbolic length}
\subjclass{Primary 30F99; Secondary 30F45, 30F60, 32G15}

\begin{document}
\begin{abstract}
In our previous work \cite{MasumotoPreprint}, for a given Riemann surface 
$Y_{0}$ with marked handle, we investigated geometric properties of the set of 
marked once-holed tori $X$ allowing holomorphic mappings of $X$ into $Y_{0}$. 
It turned out that it is a closed domain with Lipschitz boundary. In the 
present paper we show that the boundary is never smooth. Also, we evaluate the 
critical extremal length for the existence of holomorphic mappings in terms of 
hyperbolic lengths. 
\end{abstract}
\maketitle

\section{Introduction}
\label{sec:introductinon}

Let $R_{1}$ and $R_{2}$ be Riemann surfaces. It is a natural question whether 
there are holomorphic or conformal mappings of $R_{1}$ into $R_{2}$ with some 
geometric or analytic properties. In the present article we consider the 
problem in the case where $R_{1}$ is a once-holed torus, and look for 
handle-preserving mappings. 

Since Riemann surfaces of genus zero are conformally equivalent to plane 
regions, Riemann surfaces of positive genus should play the leading character 
in Riemann surfaces theory. Once-holed tori are topologically the simplest 
among the nonplanar Riemann surfaces. They are building blocks of Riemann 
surfaces of positive genus; every Riemann surface of positive genus $g$ is 
obtained from $g$ once-holed tori by suitable identification. Open disks are 
one of the simplest plane domains, and studies of functions on open disks are 
of fundamental importance for local theory. Thus studies of holomorphic 
mappings on once-holed tori would be significant for ``local theory" of 
holomorphic mappings between Riemann surfaces. While open disk are conformally 
equivalent to one another, once-holed tori are not. Hence we need to know which 
once-holed tori are included in a Riemann surface under consideration. This 
amounts to ask the existence of conformal mappings of once-holed tori into 
Riemann surfaces. 

For the existence of conformal mappings of once-holed tori several results are 
known. In \cite{Shiba1987} and \cite{Shiba1993} Shiba investigated the set of 
tori into which a given open Riemann surface of genus one is conformally 
embedded. His results give solutions to our problem in the case where $R_{2}$ 
is a torus. Also, we gave a characterization for the existence of conformal 
mappings of a once-holed torus into another explicitly in terms of finitely 
many extremal lengths (see \cite{Masumoto1995}). In \cite{Masumoto2007} we 
examined the set of once-holed tori that can be conformally embedded into a 
given Riemann surface of positive genus. For topologically finite surfaces 
Kahn-Pilgrim-Thurston \cite{KPT} has recently given a characterization for the 
existence of conformal embeddings in terms of extremal lengths. 

On the other hand, few results are known for the existence of holomorphic 
mappings of once-holed tori. If $R_{2}$ is a torus, then the Behnke-Stein 
theorem yields that any once-holed torus allows handle-preserving holomorphic 
mappings into $R_{2}$. If $R_{2}$ is not a torus, then it carries a hyperbolic 
metric. Since holomorphic mappings decrease hyperbolic lengths, we obtain 
necessary conditions for the existence of holomorphic mappings. However, they 
are not sufficient by a recent result of Bourque \cite{Bourque2016}. 

The space $\mathfrak{T}$ of marked once-holed tori is a three-dimensional 
real-analytic manifold with boundary. In our previous work 
\cite{MasumotoPreprint}, for a given Riemann surface $Y_{0}$ with marked 
handle, we investigated the set $\mathfrak{T}_{a}[Y_{0}]$ of marked once-holed 
tori $X$ such that there is a holomorphic mapping of $X$ into $Y_{0}$. We 
introduced a new condition called a handle condition to obtain the following 
two results. 

\begin{prop}[\mbox{\rm \cite[Theorem~1]{MasumotoPreprint}}]
\label{prop:holomorphic:Lipschitz}
$\mathfrak{T}_{a}[Y_{0}]$ is a closed domain with Lipschitz boundary, and is a 
retract of the whole space\/ $\mathfrak{T}$. 
\end{prop}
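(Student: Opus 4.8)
The plan is to derive all three assertions from an explicit description of $\mathfrak{T}_{a}[Y_{0}]$ by extremal-length inequalities, so I first fix coordinates on $\mathfrak{T}$. With the marking fixed, let $\alpha,\beta$ be a standard generating pair for the marked handle and consider the extremal lengths $\lambda_{\alpha}(X),\lambda_{\beta}(X),\lambda_{\alpha\beta}(X)$ of the corresponding free homotopy classes on $X$. These are real-analytic functions of $X$, and by the theory of once-holed tori the triple $(\lambda_{\alpha},\lambda_{\beta},\lambda_{\alpha\beta})$ embeds $\mathfrak{T}$ as a closed domain in $\mathbb{R}^{3}$, the boundary $\partial\mathfrak{T}$ consisting of the degenerate closed tori, where the three extremal lengths become subject to the algebraic relation that holds on a torus. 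The analytic heart of the matter is a characterization of membership: $X\in\mathfrak{T}_{a}[Y_{0}]$ if and only if the handle condition holds, which I would encode as a finite system $F_{i}(X)\ge 0$, where each $F_{i}$ is a function on $\mathfrak{T}$ assembled from the extremal lengths of $X$ and the hyperbolic lengths recording the marked handle of $Y_{0}$, and is strictly increasing along the partial order given by handle-preserving conformal embedding.

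I would establish the necessity half of this characterization from the length-decreasing property of holomorphic maps. By the Schwarz--Pick lemma a handle-preserving holomorphic $f\colon X\to Y_{0}$ contracts the hyperbolic metric, so the hyperbolic length in $Y_{0}$ of the geodesic in the class $f_{*}\alpha$ is at most that of $\alpha$ in $X$; translating these hyperbolic bounds into extremal-length bounds yields $F_{i}(X)\ge 0$. The sufficiency half is the genuinely hard step, since for a general target there is no direct geometric construction of a holomorphic map. Here I would exploit the monotonicity that $X'\hookrightarrow X$ handle-preservingly and $X\in\mathfrak{T}_{a}[Y_{0}]$ imply $X'\in\mathfrak{T}_{a}[Y_{0}]$ (compose with the embedding), together with the strict monotonicity of the $F_{i}$, to reduce the construction to the extremal case in which some inequality is an equality, and then invoke the conformal-embedding theorems of Shiba \cite{Shiba1987,Shiba1993} and of the author \cite{Masumoto1995}.

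Granting the characterization, the topological and metric properties follow. Closedness is immediate, since each $F_{i}$ is continuous and $\mathfrak{T}_{a}[Y_{0}]=\bigcap_{i}\{\,F_{i}\ge 0\,\}$ is an intersection of closed sets. That it is a closed \emph{domain}, i.e.\ the closure of its interior, I would obtain from the strict monotonicity of the $F_{i}$: every boundary point, at which some constraint is an equality, is an accumulation point of points for which all constraints are strict. The Lipschitz boundary comes from the fact that extremal length is a Lipschitz function of the point of $\mathfrak{T}$ for the underlying manifold structure (indeed it is Lipschitz for the Teichm\"uller metric), so each $F_{i}$ is Lipschitz; since the $F_{i}$ are simultaneously strictly monotone along a suitable transverse coordinate direction furnished by the embedding order, the binding constraint can be solved for that coordinate, presenting $\partial\mathfrak{T}_{a}[Y_{0}]$ locally as the graph of the maximum of finitely many Lipschitz functions.

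Finally, the retraction is built from the same monotone structure: $\mathfrak{T}_{a}[Y_{0}]$ is the epigraph, over a coordinate hyperplane, of a Lipschitz function, so the map that fixes the set and pushes each point of the complement along that coordinate direction onto the graph is a continuous retraction of $\mathfrak{T}$ onto $\mathfrak{T}_{a}[Y_{0}]$. The principal obstacle throughout is the sufficiency direction of the handle condition: converting the extremal-length inequalities into an actual holomorphic mapping, which—unlike the conformal-embedding case—seems to require delicate limiting and degeneration arguments rather than a single explicit construction.
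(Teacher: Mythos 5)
There is a genuine gap, and it sits exactly where you flagged the difficulty. Your plan rests on characterizing membership in $\mathfrak{T}_{a}[Y_{0}]$ by a finite system of length inequalities $F_{i}(X)\geqq 0$, with necessity from Schwarz--Pick and sufficiency from the conformal-embedding theorems of Shiba and of \cite{Masumoto1995}. No such characterization exists for a general target $Y_{0}$: the paper's introduction points out (citing Bourque \cite{Bourque2016}) that the length-decreasing necessary conditions are \emph{not} sufficient, and Theorem~\ref{thm:length_spectra} of this very paper quantifies the failure --- the set $\mathfrak{T}_{\sigma}[Y_{0}]$ cut out by \emph{all} the hyperbolic-length inequalities $l(X,w)\geqq l(Y_{0},w)$, $w\in W^{*}$, contains $\mathfrak{T}_{a}[Y_{0}]$ properly, with a three-dimensional difference homeomorphic to $\mathbb{C}^{*}\times[0,1)$. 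The finite extremal-length criterion of \cite{Masumoto1995} governs conformal mappings between once-holed tori, not holomorphic mappings into an arbitrary Riemann surface with marked handle, so the sufficiency half cannot be salvaged by reduction to those results. Consequently your derivations of closedness (``each $F_{i}$ is continuous'') and of the closed-domain property also lose their foundation; closedness of $\mathfrak{T}_{a}[Y_{0}]$ is in fact a separate normal-families/compactness statement about holomorphic mappings, quoted from \cite{MasumotoPreprint}, not a consequence of continuity of defining functions.

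The route the paper actually takes avoids any characterization whatsoever. The statement ``there is a holomorphic mapping of $X$ into $Y_{0}$'' is a \emph{handle condition}: it is monotone under the partial order $\preceq$ given by conformal embedding, simply because one can precompose. Propositions~\ref{prop:handle_condition:Lipschitz} and~\ref{prop:handle_condition:graph} (quoted from the earlier paper) show that for \emph{any} handle condition $\mathcal{P}$ the image $\Lambda(\mathfrak{T}^{\circ}[\mathcal{P}])$ is the open epigraph, over the plane $V_{2}$, of a Lipschitz function in the diagonal direction $\boldsymbol{e}$; this abstract monotonicity alone yields the Lipschitz boundary, and the retraction is the projection along $\boldsymbol{e}$ onto the epigraph. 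Your final two paragraphs --- the epigraph picture and the push-along-a-transverse-direction retraction --- are exactly this mechanism, so the structural part of your argument is sound; what is missing is the recognition that the order-monotonicity of the condition suffices to run it directly, and that no explicit system of inequalities is available or needed.
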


The second result is expressed in terms of a specific coordinate system on 
$\mathfrak{T}$. Every once-holed torus is realized as a slit torus. For 
$(\tau,s) \in \mathbb{H} \times [0,1)$ let $X_{\tau}^{(s)}$ denote the marked 
once-holed torus obtained from the marked torus $X_{\tau}$ of modulus $\tau$ by 
deleting a horizontal segment of length $s$, where $\mathbb{H}$ is the upper 
half-plane. 

\begin{prop}[\mbox{\rm \cite[Theorem~2]{MasumotoPreprint}}]
\label{prop:holomorphic:critical_extremal}
There exists a nonnegative number $\lambda_{a}[Y_{0}]$ such that 
\begin{list}
{{\rm (\roman{claim})}}{\usecounter{claim}
\setlength{\topsep}{0pt}
\setlength{\itemsep}{0pt}
\setlength{\parsep}{0pt}
\setlength{\labelwidth}{\leftmargin}}
\item if\/ $\im\tau \geqq 1/\lambda_{a}[Y_{0}]$, then there are no holomorphic 
mappings of $X_{\tau}^{(s)}$ into $Y_{0}$ for any $s$, while 

\item if\/ $\im\tau<1/\lambda_{a}[Y_{0}]$, then there are holomorphic mappings 
of $X_{\tau}^{(s)}$ into $Y_{0}$ for some $s$. 
\end{list}
\end{prop}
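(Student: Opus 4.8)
The plan is to study the projection of $\mathfrak{T}_{a}[Y_{0}]$ onto the $\tau$-coordinate and to show that it is a horizontal strip. Write $E=\{(\tau,s)\in\mathbb{H}\times[0,1):X_{\tau}^{(s)}\in\mathfrak{T}_{a}[Y_{0}]\}$ and let $P=\{\tau\in\mathbb{H}:(\tau,s)\in E\text{ for some }s\}$ be its projection; by Proposition~\ref{prop:holomorphic:Lipschitz} the set $E$ is a closed domain with Lipschitz boundary. If $Y_{0}$ is a torus, then by the Behnke--Stein argument recalled in the introduction every $X_{\tau}^{(s)}$ maps holomorphically into $Y_{0}$, so $P=\mathbb{H}$ and one takes $\lambda_{a}[Y_{0}]=0$; thus I may assume that $Y_{0}$ is not a torus and hence carries a hyperbolic metric. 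The goal becomes to produce $h\in(0,\infty]$ with $P=\{\tau:\im\tau<h\}$ and then to set $\lambda_{a}[Y_{0}]=1/h$.

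First I would record the monotonicity in the slit length. For $s_{0}\leqq s_{1}$ the nested slits exhibit $X_{\tau}^{(s_{1})}$ as a subsurface of $X_{\tau}^{(s_{0})}$, and the inclusion is a handle-preserving conformal embedding; restricting a holomorphic mapping $X_{\tau}^{(s_{0})}\to Y_{0}$ to this subsurface shows that $(\tau,s_{0})\in E$ implies $(\tau,s_{1})\in E$. Hence each fibre $\{s:(\tau,s)\in E\}$ is upward closed, and since $E$ is closed it has the form $[\sigma(\tau),1)$ with $\sigma(\tau)\in(0,1]$, the value $\sigma(\tau)=1$ signalling an empty fibre. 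Because $Y_{0}$ is hyperbolic and a holomorphic image of the compact torus $X_{\tau}=X_{\tau}^{(0)}$ is constant, the closed torus never lies in $\mathfrak{T}_{a}[Y_{0}]$, so $\sigma(\tau)>0$ always. Consequently $\tau\in P$ if and only if $\sigma(\tau)<1$.

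Next I would transfer the problem to the modulus $\tau$. The key lemma is that, when the slit length is allowed to vary, $X_{\tau'}^{(s')}$ admits a handle-preserving conformal embedding into $X_{\tau}^{(s)}$ for suitable $s,s'$ whenever $\im\tau'\leqq\im\tau$; this I would establish from the extremal-length characterization of conformal embeddings of once-holed tori in \cite{Masumoto1995}, applied to the cycle along which the slit is taken. Composing such an embedding with a holomorphic mapping into $Y_{0}$ gives the implication $\tau\in P\Rightarrow\tau'\in P$ whenever $\im\tau'\leqq\im\tau$. In particular membership in $P$ depends only on $\im\tau$ and is monotone in it, so either $P=\{\tau:\im\tau<h\}$ or $P=\{\tau:\im\tau\leqq h\}$, where $h=\sup_{\tau\in P}\im\tau\in(0,\infty]$. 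Setting $\lambda_{a}[Y_{0}]=1/h$ then turns (ii) into the assertion that the first alternative holds.

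It remains to exclude the boundary line, that is, to show that $\im\tau=h$ forces non-existence, which is exactly where Proposition~\ref{prop:holomorphic:Lipschitz} is needed. I would argue that if some $X_{\tau_{0}}^{(s_{0})}$ with $\im\tau_{0}=h$ admitted a holomorphic mapping into $Y_{0}$, then the retraction and Lipschitz-boundary structure of $\mathfrak{T}_{a}[Y_{0}]$, together with the openness of the embedding criterion of the previous step, would produce points of $E$ with $\im\tau>h$, contradicting the definition of $h$; hence the supremum is not attained and (i) holds on $\{\im\tau\geqq h\}$. The main obstacle is precisely the $\im\tau$-monotonicity and $\re\tau$-independence of $P$: for genuine holomorphic rather than conformal mappings the naive extremal-length comparison may fail, since a non-injective mapping can increase the area integral through multiplicity, so the sharp, real-part-free threshold cannot be read off from a single length estimate and must instead be extracted from the conformal-embedding theory of once-holed tori and from the structural Proposition~\ref{prop:holomorphic:Lipschitz} by way of the handle condition.
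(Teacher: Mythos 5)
Your skeleton (project $\mathfrak{T}_{a}[Y_{0}]$ to the $\tau$-coordinate, use the handle-condition monotonicity in $s$, then a monotonicity in $\im\tau$ via conformal embeddings, then exclude the critical line) matches the mechanism behind Proposition~\ref{prop:handle_condition:strip}; note that the present paper does not reprove the statement but cites \cite{MasumotoPreprint}, observing that everything except the exclusion of the critical line already follows from Proposition~\ref{prop:handle_condition:strip}. Two of your steps, however, have genuine gaps. First, your key lemma is false as stated with the non-strict inequality $\im\tau'\leqq\im\tau$: if it were true, the same reasoning would show that $\Pi\circ\Sigma(\mathfrak{T}[\mathcal{P}])$ is a full open or closed strip for \emph{every} handle condition $\mathcal{P}$, whereas for the conformal condition $\mathcal{P}_{c}$ the set $\Pi\circ\Sigma(\mathfrak{T}_{c}[Y_{0}])$ can meet the critical line $\im\tau=1/\lambda_{c}[Y_{0}]$ in exactly one point (Example~\ref{exmp:conformal:meet}; see also \cite[Example~13]{MasumotoPreprint}). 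Only the strict version $\im\tau'<\im\tau$ is available, and it yields precisely the two inclusions of Proposition~\ref{prop:handle_condition:strip}, not your claim that membership in $P$ depends only on $\im\tau$.

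Second, and this is the heart of the proposition, your exclusion of the critical line cannot work as described. The retract and Lipschitz-boundary structure hold verbatim for $\mathfrak{T}_{c}[Y_{0}]=\mathfrak{T}_{1}[Y_{0}]$ (Proposition~\ref{prop:finite_degree:Lipschitz}), yet its projection does meet the critical line in Example~\ref{exmp:conformal:meet}; so no argument using only those structural facts can rule out points with $\im\tau=1/\lambda_{a}[Y_{0}]$. One needs an input specific to holomorphic (as opposed to conformal) mappings. The paper supplies it in the proof of Theorem~\ref{thm:critical_extremal:holomorphic}: for $X_{\tau}^{(s)}\in\mathfrak{T}_{a}[Y_{0}]$ one has the strict chain
$$
\frac{1}{\,\im\tau\,}=\lambda\bigl(X_{\tau}^{(s)}\bigr)=
\frac{1}{\,\pi\,}l(D_{\tau})>\frac{1}{\,\pi\,}l\bigl(X_{\tau}^{(s)}\bigr)
\geqq \frac{1}{\,\pi\,}l(Y_{0}),
$$
combining the identity \eqref{eq:hyperbolic-extremal:annulus} for the embedded annulus $D_{\tau}$ with the length-decreasing property of holomorphic mappings; the strict middle inequality (Schwarz's lemma for the proper inclusion $D_{\tau}\subset T_{\tau}^{(s)}$) is exactly what pushes $\mathfrak{T}_{a}[Y_{0}]$ off the critical line. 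A minor further slip: $X_{\tau}^{(0)}$ is the once-punctured torus, not the compact torus, so your claim that $\sigma(\tau)>0$ always is unjustified (and false when, e.g., $Y_{0}$ is itself a marked once-punctured torus); fortunately nothing essential in your argument depends on it.
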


Propositions~\ref{prop:holomorphic:Lipschitz} 
and~\ref{prop:holomorphic:critical_extremal} raise the following natural 
questions: 

\begin{list}
{{\rm (\arabic{claim})}}{\usecounter{claim}
\setlength{\itemsep}{0pt}
\setlength{\parsep}{0pt}
\setlength{\labelwidth}{\leftmargin}}
\item Is the boundary of $\mathfrak{T}_{a}[Y_{0}]$ smooth? 

\item What is the value of $\lambda_{a}[Y_{0}]$? 
\end{list}

In the present paper we answer these questions. We first show that the boundary 
of $\mathfrak{T}_{a}[Y_{0}]$ is not smooth in most cases: 

\begin{thm}
\label{thm:holomorphic:nonsmooth}
If $Y_{0}$ is not a marked torus or a marked once-punctured torus, then the 
boundary of\/ $\mathfrak{T}_{a}[Y_{0}]$ is not smooth. 
\end{thm}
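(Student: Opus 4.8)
The plan is to reduce the smoothness question to the regularity of the function that presents $\partial\mathfrak{T}_{a}[Y_{0}]$ as a graph, and then to locate a corner by exhibiting a boundary point at which two distinct extremal configurations are simultaneously active. First I would record the monotonicity underlying Proposition~\ref{prop:holomorphic:critical_extremal}: enlarging the slit replaces $X_{\tau}^{(s)}$ by a conformal subsurface, so that if $X_{\tau}^{(s)}$ admits a holomorphic mapping into $Y_{0}$ then, composing with the inclusion $X_{\tau}^{(s')}\hookrightarrow X_{\tau}^{(s)}$, so does $X_{\tau}^{(s')}$ for every $s'\geqq s$. Hence for each $\tau$ with $\im\tau<1/\lambda_{a}[Y_{0}]$ the admissible slit lengths form an interval $[\sigma(\tau),1)$, and the part of $\partial\mathfrak{T}_{a}[Y_{0}]$ interior to $\mathfrak{T}$ is exactly the graph $s=\sigma(\tau)$. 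Smoothness of the boundary is therefore equivalent to $\sigma$ being of class $C^{1}$, together with the graph meeting the face $\{s=0\}$ of $\partial\mathfrak{T}$ without forming an edge.

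Next I would bring in the hyperbolic-length description of the critical value to rewrite the equation $s=\sigma(\tau)$ as a level relation $\Phi(\tau,s)=\mu(Y_{0})$, where $\Phi$ is the extremal-length functional attached to the marked handle of $X_{\tau}^{(s)}$ and $\mu(Y_{0})$ is the critical value determined by the shortest geodesic representing the marked handle of $Y_{0}$. The functional $\Phi$ is real-analytic on the open stratum where a single extremal configuration, that is, one homotopy class of handle curve and the associated Jenkins--Strebel type, realizes it, and globally it is the infimum of such analytic branches. Where two branches attain the infimum with distinct gradients, $\Phi$ is the minimum of two real-analytic functions and fails to be differentiable, so the corresponding level set carries a ridge. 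The heart of the argument is thus to produce a boundary point at which two branches tie.

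To do so I would use that $Y_{0}$, being neither a marked torus nor a marked once-punctured torus, carries a hyperbolic metric together with topology beyond the minimal handle, so that the marked handle of $X_{\tau}^{(s)}$ can be carried into $Y_{0}$ realizing at least two non-homotopic image configurations. As $\tau$ moves along a suitable arc in $\mathbb{H}$ the binding branch of $\Phi$ changes from one such configuration to another; by continuity there is an intermediate $\tau_{*}$ at which both bind. At $(\tau_{*},\sigma(\tau_{*}))$ the defining function is a genuine minimum of two analytic branches, so $\sigma$ is not $C^{1}$ and the boundary is not smooth. I would also keep track of the possibility that the graph reaches the face $\{s=0\}$, where the union of the graph with a piece of $\partial\mathfrak{T}$ produces an edge; either occurrence proves the theorem.

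The main obstacle is transversality: I must verify that the two tying branches have genuinely different gradients at $\tau_{*}$, so that the minimum really has a corner rather than a smooth tangency of equal one-jets. I would check this by computing the first variation of each extremal-length branch, equivalently the derivative along the deformation of the corresponding hyperbolic length, and comparing them, using that configurations carried by distinct homotopy classes determine linearly independent period functionals. A secondary point requiring care is sharpness of the hypothesis: I must confirm that for a target torus the region is all of $\mathfrak{T}$ by the Behnke--Stein theorem, and that for a once-punctured torus the competing configurations degenerate to a single branch, the cusp contributing no second binding geodesic, so that no crossing occurs and the boundary remains smooth. This would explain why exactly these two targets are excluded.
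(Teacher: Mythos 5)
There is a genuine gap, and it sits at the heart of your argument. You propose to rewrite the boundary $\partial\mathfrak{T}_{a}[Y_{0}]$ as a level set $\Phi(\tau,s)=\mu(Y_{0})$ of an extremal-length functional that is an infimum of real-analytic branches, and then to find a point where two branches tie with distinct gradients. But no such defining functional is available: membership in $\mathfrak{T}_{a}[Y_{0}]$ is the existence of a holomorphic mapping, and this is \emph{not} characterized by any extremal-length or hyperbolic-length quantity reaching a critical value. Indeed, Bourque's theorem (cited in the paper, and elaborated in \S4) shows that even the full system of length-spectrum inequalities defining $\mathfrak{T}_{\sigma}[Y_{0}]$ cuts out a strictly larger set than $\mathfrak{T}_{a}[Y_{0}]$. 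So the ``minimum of analytic branches'' structure on which your corner-detection rests is asserted, not established, and there is no reason to believe it. Even granting it, you never actually produce the tie point $\tau_{*}$ nor verify the gradient transversality; you explicitly defer both, and these are precisely the content of the theorem. Finally, your closing claim that for a once-punctured torus target ``the boundary remains smooth'' overreaches: the paper's remark after the proof states that this case is \emph{open} (non-smoothness holds in Fenchel--Nielsen coordinates but the smooth structure on $\mathfrak{T}$ is the one induced by $\Lambda$, which degenerates along $\partial\mathfrak{T}$); the hypothesis excludes that case because the argument breaks down there, not because the conclusion is known to fail.

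The paper's actual proof is far more economical and avoids any global description of the boundary. One first replaces $Y_{0}$ by its handle covering surface $\tilde{Y}_{0}$, using $\mathfrak{T}_{a}[\tilde{Y}_{0}]=\mathfrak{T}_{a}[Y_{0}]$; the hypotheses guarantee $\tilde{Y}_{0}\in\mathfrak{T}^{\circ}$. Then, in Fenchel--Nielsen coordinates $\Phi(X)=(l(X),l'(X),\theta(X))$, the Schwarz lemma gives \emph{two} independent necessary conditions $l(X)\geqq l(Y_{0})$ and $l'(X)\geqq l'(Y_{0})$ for $X\in\mathfrak{T}_{a}[Y_{0}]$, so $\mathfrak{T}_{a}[Y_{0}]$ is trapped in a quadrant-like corner; since $Y_{0}$ itself belongs to $\mathfrak{T}_{a}[Y_{0}]$ and realizes equality in both, the boundary has a corner at $Y_{0}$. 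The ``two simultaneously active constraints'' you were hunting for along the boundary are thus exhibited at a single explicit point, namely $\tilde{Y}_{0}$, with no need for a branch-crossing or continuity argument. If you want to salvage your approach, you would need at minimum a rigorous local model of $\partial\mathfrak{T}_{a}[Y_{0}]$ near some point as a minimum of finitely many smooth functions, which the present state of knowledge does not supply.
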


We prove Theorem~\ref{thm:holomorphic:nonsmooth} in \S3 after summarizing 
results of \cite{MasumotoPreprint} in \S2. In \S4 we compare 
$\mathfrak{T}_{a}[Y_{0}]$ with the set $\mathfrak{T}_{\sigma}[Y_{0}]$ of marked 
once-holed tori having longer geodesics than corresponding geodesics on 
$Y_{0}$. In the final section we give an answer to second question~(2) (see 
Theorem~\ref{thm:critical_extremal:holomorphic}). 

\section{Preliminaries}
\label{sec:preliminaries}

Let $R$ be a Riemann surface of positive genus; it may be compact or of 
infinite genus. It has one or more handles. A handle of $R$ is specified by a 
couple of loops on $R$. With this in mind we make the following definitions. A 
{\em mark of handle\/} of $R$ is, by definition, an ordered pair $\chi=\{a,b\}$ 
of simple loops $a$ and $b$ on $R$ whose geometric intersection number 
$a \times b$ is equal to one. A {\em Riemann surface with marked handle\/} 
means a pair $(R,\chi)$, where $R$ is a Riemann surface of positive genus and 
$\chi$ is a mark of handle of $R$. 

Let $Y_{1}:=(R_{1},\chi_{1})$ and $Y_{2}:=(R_{2},\chi_{2})$ be Riemann surfaces 
with marked handle, where $\chi_{j}=\{a_{j},b_{j}\}$ for $j=1,2$. If a 
continuous mapping $f:R_{1} \to R_{2}$ maps $a_{1}$ and $b_{1}$ onto loops 
freely homotopic to $a_{2}$ and $b_{2}$ on $R_{2}$, respectively, then we say 
that $f$ is a continuous mapping of $Y_{1}$ into $Y_{2}$ and use the notation 
$f:Y_{1} \to Y_{2}$. If $f:R_{1} \to R_{2}$ possesses some additional 
properties, then $f:Y_{1} \to Y_{2}$ is said to possess the same properties. 
For example, if $f:R_{1} \to R_{2}$ is conformal, then $f:Y_{1} \to Y_{2}$ is 
called conformal. Here, by a conformal mapping we mean a holomorphic injection; 
we do not require conformal mappings to be surjective. We consider continuous 
mappings of $Y_{1}$ into $Y_{2}$ preserve the handles specified by $\chi_{1}$ 
and $\chi_{2}$. 

A {\em once-holed torus\/} is, by definition, a noncompact Riemann surface of 
genus one with exactly one boundary component in the sense of 
Ker\'{e}kj\'{a}rt\'{o}-Sto\"{\i}low. For example, the Riemann surface obtained 
from a torus, that is, a compact Riemann surface of genus one, by removing one 
point is a once-holed torus, which will be referred to as a {\em once-punctured 
torus}. Note that once-holed tori are not bordered surfaces. A once-holed torus 
with marked handle is usually called a {\em marked once-holed torus}. The 
meaning of a {\em marked once-punctured torus\/} is obvious. 

Let $\mathfrak{T}$ denote the set of marked once-holed tori, where two marked 
once-holed tori are identified if there is a conformal mapping of one 
{\em onto\/} the other. As a set, it is the disjoint union of the 
Teichm\"{u}ller space $\mathfrak{T}'$ of a once-punctured torus and the reduced 
Teichm\"{u}ller space $\mathfrak{T}''$ of a once-holed torus that is not a 
once-punctured torus. 

There is a canonical injection $\Lambda$ of $\mathfrak{T}$ into 
$\mathbb{R}_{+}^{3}$; if $X=(T,\chi)$ with $\chi=\{a,b\}$, then $\Lambda(X)$ is 
the triplet of the extremal lengths of the free homotopy classes of $a$, $b$ 
and $ab^{-1}$. We know that 
$$
\mathfrak{L}:=\Lambda(\mathfrak{T})=\{\boldsymbol{x} \in \mathbb{R}_{+}^{3} \mid
Q(\boldsymbol{x})+4 \leqq 0\},
$$
where $Q(\boldsymbol{x})=Q(x_{1},x_{2},x_{3})=
x_{1}^{2}+x_{2}^{2}+x_{3}^{3}-2(x_{1}x_{2}+x_{2}x_{3}+x_{3}x_{1})$. Note that 
$\Lambda$ maps $\mathfrak{T}'$ and $\mathfrak{T}''$ onto the boundary 
$\partial\mathfrak{L}$ and the interior 
$\mathfrak{L}^{\circ}:=\mathfrak{L} \setminus \partial\mathfrak{L}$, 
respectively. Moreover, the restrictions 
$\Lambda|_{\mathfrak{T}'}:\mathfrak{T}' \to \partial\mathfrak{L}$ and 
$\Lambda|_{\mathfrak{T}''}:\mathfrak{T}'' \to \mathfrak{L}^{\circ}$ are 
real-analytic diffeomorphisms. We regard $\mathfrak{T}$ as a three-dimensional 
real-analytic manifold with boundary so that 
$\Lambda:\mathfrak{T} \to \mathfrak{L}$ is a real-analytic diffeomorphism. In 
the rest of the article we use the notations $\partial\mathfrak{T}$ and 
$\mathfrak{T}^{\circ}$ instead of $\mathfrak{T}'$ and $\mathfrak{T}''$, 
respectively. For details, see \cite[\S7]{Masumoto1995}. 

Now, fix a Riemann surface $Y_{0}=(R_{0},\chi_{0})$ with marked handle. For a 
given marked once-holed torus $X$ there may or may not exist holomorphic 
mappings of $X$ into $Y_{0}$. We are interested in the set of marked once-holed 
tori which allow holomorphic mappings into $Y_{0}$. We denote by 
$\mathfrak{T}_{a}[Y_{0}]$ (resp.\ $\mathfrak{T}_{c}[Y_{0}]$) the set of 
$X \in \mathfrak{T}$ such that there exists a holomorphic (resp.\ conformal) 
mapping of $X$ into $Y_{0}$. In our previous work \cite{MasumotoPreprint} we 
introduced handle conditions to investigated geometric properties of 
$\mathfrak{T}_{a}[Y_{0}]$ and $\mathfrak{T}_{c}[Y_{0}]$. We recall the 
definition of a handle condition. 

For $X,X' \in \mathfrak{T}$ we say that $X$ is smaller than $X'$ and write 
$X \preceq X'$ if there is a conformal mapping of $X$ into $X'$. The relation 
$\preceq$ is then an order relation on $\mathfrak{T}$. 

A mathematical statement $\mathcal{P}(X)$, where the free variable $X$ ranges 
over $\mathfrak{T}$, is called a {\em handle condition\/} if 
$\mathcal{P}(X_{1})$ implies $\mathcal{P}(X_{2})$ for all 
$X_{1},X_{2} \in \mathfrak{T}$ with $X_{2} \preceq X_{1}$. Important examples 
are the statements ``there is a holomorphic mapping of $X$ into $Y_{0}$" and 
``there is a conformal mapping of $X$ into $Y_{0}$," which will be denoted by 
$\mathcal{P}_{a}(X)$ and $\mathcal{P}_{c}(X)$, respectively. For 
$\nu \in \mathbb{N}$ the statement 
$$
\mathcal{P}_{\nu}(X)=
\text{``There is a holomorphic mapping $f:X \to Y_{0}$ with $d(f)<\nu+1$"}
$$
is another handle condition, where $d(f)$ is the supremum of the cardinal 
numbers of $f^{-1}(p)$, $p \in R_{0}$. Note that 
$\mathcal{P}_{1}(X)=\mathcal{P}_{c}(X)$. 

Set $\mathfrak{T}[\mathcal{P}]=\{X \in \mathfrak{T} \mid \mathcal{P}(X)\}$. 
Then we have the following proposition. 

\begin{prop}[\mbox{\rm \cite[Theorem~3]{MasumotoPreprint}}]
\label{prop:handle_condition:Lipschitz}
If\/ $\mathfrak{T}[\mathcal{P}] \neq \varnothing$, then its interior\/ 
$\mathfrak{T}^{\circ}[\mathcal{P}]$ is a domain with Lipschitz boundary. 
\end{prop}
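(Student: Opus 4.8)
The plan is to reduce the statement to a purely order-theoretic property of $\preceq$-lower sets and then to read it off in the extremal-length coordinates. Since $\mathcal{P}$ is a handle condition, the set $D:=\mathfrak{T}[\mathcal{P}]$ is a \emph{lower set} for the order $\preceq$: if $X_{1}\in D$ and $X_{2}\preceq X_{1}$, then $X_{2}\in D$. The handle condition enters the argument only through this one property, so it suffices to prove that the interior of any nonempty $\preceq$-lower set is a domain with Lipschitz boundary. I would carry this out in the coordinates furnished by $\Lambda$, identifying $\mathfrak{T}$ with $\mathfrak{L}\subseteq\mathbb{R}_{+}^{3}$ and $D$ with $S:=\Lambda(D)$.

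First I would translate $\preceq$ into a monotonicity in these coordinates. Because conformal mappings do not increase the extremal length of a homotopy class, a conformal embedding of $X_{2}$ into $X_{1}$ forces $\Lambda(X_{2})\geqq\Lambda(X_{1})$ coordinatewise; thus $X_{2}\preceq X_{1}$ implies $\Lambda(X_{2})\geqq\Lambda(X_{1})$. For the converse I would invoke the characterization of $\preceq$ in terms of the three extremal lengths from \cite{Masumoto1995}, which yields that whenever $\boldsymbol{y}\in\mathfrak{L}$ dominates $\boldsymbol{x}=\Lambda(X)$ coordinatewise the corresponding once-holed torus is $\preceq X$. Consequently $S$ is an \emph{up-set} for the coordinatewise order: $(S+\mathbb{R}_{\geqq0}^{3})\cap\mathfrak{L}=S$.

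The geometric heart of the proof is then the observation that the closed positive octant $\mathbb{R}_{\geqq0}^{3}$ is a proper convex cone with nonempty interior whose axis $(1,1,1)$ makes an angle strictly less than $\pi/2$ with every ray of the cone. An up-set $S$ for such a cone satisfies a two-sided cone condition at each point of $\partial S$ — the open cone $\boldsymbol{x}+\mathbb{R}_{>0}^{3}$ lies in the interior of $S$ while $\boldsymbol{x}-\mathbb{R}_{>0}^{3}$ lies in the complement of its closure — and this is exactly the condition guaranteeing that $\partial S$ is, in coordinates adapted to the axis, the graph of a Lipschitz function, with Lipschitz constant controlled by the aperture of the octant. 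I would use this to describe $\partial S$ inside $\mathfrak{L}^{\circ}$ as a Lipschitz graph; the part of the boundary lying on $\partial\mathfrak{L}$ is Lipschitz because $\partial\mathfrak{L}$ is real-analytic, and the two pieces fit together because the cone is transverse to $\partial\mathfrak{L}$. For connectedness I would fix any $\boldsymbol{x}_{0}\in S$; every point all of whose coordinates exceed those of $\boldsymbol{x}_{0}$ dominates $\boldsymbol{x}_{0}$ and hence lies in the interior of $S$, this ``deep'' region is connected, and any interior point is joined to it by an increasing ray that stays in the interior. Thus $S^{\circ}$, and therefore $\mathfrak{T}^{\circ}[\mathcal{P}]$, is a domain.

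The main obstacle I anticipate is the converse monotonicity of the second step: extracting from the conformal-embedding characterization a genuine cone of directions of \emph{positive aperture} along which $\preceq$ is preserved. Coordinatewise domination is only the necessary side, and one must confirm that it is also sufficient — or isolate a sufficient subcone — uniformly, including near the degenerate strata of $\mathfrak{L}$ and along $\partial\mathfrak{L}$, since it is precisely the positive aperture that upgrades mere monotonicity to a Lipschitz bound rather than the merely monotone (possibly vertical) boundary that a weaker one-sided condition would permit.
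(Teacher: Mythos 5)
The overall skeleton you propose---exhibit $\Lambda(\mathfrak{T}[\mathcal{P}])$ as an up-set for a cone of positive aperture containing the direction $\boldsymbol{e}$, conclude that its boundary is a Lipschitz graph over $V_{2}$, and obtain connectedness by following rays of the cone---is consistent with the structure recorded in Proposition~\ref{prop:handle_condition:graph}. But the cone you actually use, the positive octant, does not work, and the failure occurs precisely at the step you flagged. Coordinatewise domination of the three extremal lengths is \emph{necessary} for $\preceq$ but not sufficient, so the identity $(S+\mathbb{R}_{\geqq0}^{3})\cap\mathfrak{L}=S$ that you assert for $S=\Lambda(\mathfrak{T}[\mathcal{P}])$ can fail. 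Concretely, the points $\boldsymbol{x}_{0}=(2/\sqrt{3\,})(1,1,1)$ and $\boldsymbol{x}_{1}=(2/\sqrt{3\,})(3,1,1)$ both satisfy $Q(\boldsymbol{x})+4=0$, hence both lie on $\partial\mathfrak{L}$ and represent marked once-punctured tori, and $\boldsymbol{x}_{1}\geqq\boldsymbol{x}_{0}$ coordinatewise. Yet there is no handle-preserving conformal mapping of $\Lambda^{-1}(\boldsymbol{x}_{1})$ into $\Lambda^{-1}(\boldsymbol{x}_{0})$: such a mapping would extend across the puncture to a holomorphic map of the marked tori, necessarily a marking-preserving isomorphism carrying puncture to puncture, which would force the two surfaces to coincide in $\mathfrak{T}$. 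Now take the handle condition $\mathcal{P}_{c}$ with $Y_{0}=\Lambda^{-1}(\boldsymbol{x}_{0})$; then $S=\Lambda(\mathfrak{T}_{c}[Y_{0}])$ meets $\partial\mathfrak{L}$ only at $\boldsymbol{x}_{0}$, so $\boldsymbol{x}_{1}$ belongs to $(S+\mathbb{R}_{\geqq0}^{3})\cap\mathfrak{L}$ but not to $S$. Your second step is therefore false as stated, and with it the two-sided octant cone condition on which the Lipschitz bound rests.

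What the argument actually requires is a \emph{fixed proper subcone} of directions with $\boldsymbol{e}$ in its interior along which $\preceq$ is preserved uniformly over all of $\mathfrak{L}$; the example above shows that the coordinate axes cannot belong to this cone (at least near $\partial\mathfrak{L}$), so it is genuinely smaller than the octant. Isolating such a cone cannot be done from the necessary coordinatewise inequalities alone: it needs the full sufficiency statement of the characterization in \cite{Masumoto1995}, which involves more than domination in the three coordinates of $\Lambda$, and one must verify the uniformity of the aperture, including along $\partial\mathfrak{L}$. Once that uniform cone is in hand, your Lipschitz-graph and connectedness arguments do go through; without it the proposal establishes neither the Lipschitz bound nor even that $S$ is closed under the directions you move in.
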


\begin{rem}
In the case where $\mathfrak{T}[\mathcal{P}]=\mathfrak{T}$, we consider 
$\mathfrak{T}[\mathcal{P}]$ to have a Lipschitz boundary though the boundary 
$\partial\mathfrak{T}[\mathcal{P}]$ is in fact an empty set. 
\end{rem}

Actually, we can show more. The eigenvalues of the coefficient matrix of the 
quadratic form $Q(\boldsymbol{x})$ are $-1$ and $2$. The corresponding 
eigenspaces $V_{-1}$ and $V_{2}$ re, respectively, the line $x_{1}=x_{2}=x_{3}$ 
and the plane $x_{1}+x_{2}+x_{3}=0$. Let $\boldsymbol{e}=
\bigl(1/\sqrt{3\,},1/\sqrt{3\,},1/\sqrt{3\,}\bigr) \in V_{-1}$. 

\begin{prop}[\mbox{\cite[Proposition~4]{MasumotoPreprint}}]
\label{prop:handle_condition:graph}
There is a Lipschitz continuous function $e[\mathcal{P}](\,\cdot\,)$ on $V_{2}$ 
such that 
$$
\Lambda(\mathfrak{T}^{\circ}[\mathcal{P}])=
\{\boldsymbol{\zeta}+t\boldsymbol{e} \mid
\boldsymbol{\zeta} \in V_{2}[\mathcal{P}],
t>e[\mathcal{P}](\boldsymbol{\zeta})\},
$$
provided that $\mathfrak{T}[\mathcal{P}] \neq \varnothing$. 
\end{prop}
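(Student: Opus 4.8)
The plan is to transport everything to the image side through the diffeomorphism $\Lambda$ and to recognize the resulting region as a genuine epigraph. Write $\mathcal{D}:=\Lambda(\mathfrak{T}^{\circ}[\mathcal{P}])$ and decompose each $\boldsymbol{x}\in\mathbb{R}^{3}$ orthogonally as $\boldsymbol{x}=\boldsymbol{\zeta}+t\boldsymbol{e}$ with $\boldsymbol{\zeta}\in V_{2}$ and $t=\boldsymbol{x}\cdot\boldsymbol{e}$, using $V_{-1}\perp V_{2}$. Since $\boldsymbol{e}$ spans the $(-1)$-eigenspace and $V_{2}$ is the $2$-eigenspace, one has $Q(\boldsymbol{\zeta}+t\boldsymbol{e})=2|\boldsymbol{\zeta}|^{2}-t^{2}$, which is strictly decreasing in $t$ for $t>0$; as the constraints $\boldsymbol{x}\in\mathbb{R}_{+}^{3}$ are themselves lower bounds on $t$, the set $\mathfrak{L}^{\circ}$ meets every line $\boldsymbol{\zeta}+\mathbb{R}\boldsymbol{e}$ in an open upward ray. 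Thus $\mathcal{D}\subseteq\mathfrak{L}^{\circ}$ is an open set whose vertical fibers lie in such rays, and the goal is precisely to show that $\mathcal{D}$ is the strict epigraph, over its projection $V_{2}[\mathcal{P}]$ to $V_{2}$, of a Lipschitz function $e[\mathcal{P}]$.

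First I would record the monotonicity of $\mathcal{P}$ in the direction $\boldsymbol{e}$. Because $\mathcal{P}$ is a handle condition it is downward closed for $\preceq$, so everything rests on the embedding fact that sliding the extremal-length triple along $+\boldsymbol{e}$ produces a smaller once-holed torus: if $\Lambda(X')=\Lambda(X)+\delta\boldsymbol{e}$ with $\delta>0$, then $X'\preceq X$ (this is the content of the conformal-embedding theory of once-holed tori; compare \cite{Masumoto1995} and \cite{MasumotoPreprint}). Granting it, $\mathcal{P}(X)$ implies $\mathcal{P}(X')$, so $\Lambda(\mathfrak{T}[\mathcal{P}])$ is stable under adding positive multiples of $\boldsymbol{e}$, and, passing to interiors, so is $\mathcal{D}$. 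Since $\mathcal{D}$ is open, each nonempty fiber $\{t:\boldsymbol{\zeta}+t\boldsymbol{e}\in\mathcal{D}\}$ is then an open upward ray $(e[\mathcal{P}](\boldsymbol{\zeta}),\infty)$, where I set $e[\mathcal{P}](\boldsymbol{\zeta})$ to be its infimum and let $V_{2}[\mathcal{P}]$ be the (open) projection of $\mathcal{D}$ onto $V_{2}$. This already yields the asserted set equality.

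The substance of the proposition is the Lipschitz bound, and for this the one-dimensional monotonicity above must be strengthened to hold uniformly in an open cone of directions about $\boldsymbol{e}$. Thus the plan is to upgrade the embedding fact to a cone statement: there is a constant $L\geqq 0$ such that $\Lambda(X')-\Lambda(X)=\delta\boldsymbol{e}+\boldsymbol{\eta}$ with $\boldsymbol{\eta}\in V_{2}$ and $\delta\geqq L|\boldsymbol{\eta}|$ forces $X'\preceq X$; equivalently, $\mathcal{D}+C\subseteq\mathcal{D}$ for the open cone $C=\{\boldsymbol{\eta}+\delta\boldsymbol{e}:\boldsymbol{\eta}\in V_{2},\ \delta>L|\boldsymbol{\eta}|\}$. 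Granting this, the estimate is immediate: for $\boldsymbol{\zeta}_{1},\boldsymbol{\zeta}_{2}\in V_{2}[\mathcal{P}]$ and any $t_{2}>e[\mathcal{P}](\boldsymbol{\zeta}_{2})$, the translate $(\boldsymbol{\zeta}_{2}+t_{2}\boldsymbol{e})+C$ lies in $\mathcal{D}$ and contains $\boldsymbol{\zeta}_{1}+t\boldsymbol{e}$ for every $t>t_{2}+L|\boldsymbol{\zeta}_{1}-\boldsymbol{\zeta}_{2}|$; letting $t_{2}\downarrow e[\mathcal{P}](\boldsymbol{\zeta}_{2})$ gives $e[\mathcal{P}](\boldsymbol{\zeta}_{1})\leqq e[\mathcal{P}](\boldsymbol{\zeta}_{2})+L|\boldsymbol{\zeta}_{1}-\boldsymbol{\zeta}_{2}|$, and interchanging $\boldsymbol{\zeta}_{1}$ and $\boldsymbol{\zeta}_{2}$ shows $e[\mathcal{P}]$ is $L$-Lipschitz on $V_{2}[\mathcal{P}]$. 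A McShane extension then produces a Lipschitz function on all of $V_{2}$ without disturbing the set equality, which concerns only $\boldsymbol{\zeta}\in V_{2}[\mathcal{P}]$.

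The main obstacle is exactly the cone-monotonicity of the third step. It is the quantitative embedding input that renders $\partial\mathcal{D}$ uniformly transverse to $V_{2}$; without it, Proposition~\ref{prop:handle_condition:Lipschitz} would still furnish a Lipschitz boundary, but a boundary with near-vertical portions could be the graph of a merely continuous height function. I expect to secure the cone by revisiting the extremal-length comparisons behind Proposition~\ref{prop:handle_condition:Lipschitz} and tracking the distinguished direction $\boldsymbol{e}$ throughout, so that Proposition~\ref{prop:handle_condition:graph} emerges as the sharpening of Proposition~\ref{prop:handle_condition:Lipschitz} obtained when the Lipschitz cone is centered on $\boldsymbol{e}$.
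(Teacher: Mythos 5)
This proposition is imported verbatim from \cite[Proposition~4]{MasumotoPreprint}; the present paper contains no proof of it, so there is no in-text argument to compare yours against. Judged on its own terms, your outline has the right architecture: decompose $\mathbb{R}^{3}$ orthogonally as $V_{2}\oplus\mathbb{R}\boldsymbol{e}$ (your computation $Q(\boldsymbol{\zeta}+t\boldsymbol{e})=2|\boldsymbol{\zeta}|^{2}-t^{2}$ is correct), show that $\mathcal{D}=\Lambda(\mathfrak{T}^{\circ}[\mathcal{P}])$ is stable under translation by an open cone $C$ about $\boldsymbol{e}$, and read off both the epigraph description and the Lipschitz bound from that stability. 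The purely formal parts --- that ray-stability plus openness makes each nonempty fiber an open upward ray, that cone-stability gives the $L$-Lipschitz estimate for the infimum function, and that a McShane extension carries $e[\mathcal{P}]$ from $V_{2}[\mathcal{P}]$ to all of $V_{2}$ without disturbing the set equality --- are all correct.

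The gap is that the two geometric inputs on which everything rests are asserted rather than proved. First, the ray monotonicity --- that $\Lambda(X')=\Lambda(X)+\delta\boldsymbol{e}$ with $\delta>0$ forces $X'\preceq X$ --- is a nontrivial statement about conformal embeddings of once-holed tori: the necessary direction (an embedding $X'\to X$ increases the three extremal lengths componentwise) is elementary, but what you need is a converse along the specific direction $\boldsymbol{e}$, and you only gesture at \cite{Masumoto1995} for it. Second, and more seriously, the uniform cone-stability $\mathcal{D}+C\subseteq\mathcal{D}$ is the entire content of the Lipschitz assertion, and you explicitly defer it (``I expect to secure the cone by revisiting\dots''). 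Without a concrete cone --- for instance one extracted from the fact, recorded in \S3 of this paper, that $\Lambda(\mathfrak{T}_{c}[X])$ is a cone with vertex $\Lambda(X)$, together with a verification that these cones uniformly contain a fixed neighborhood of the direction $\boldsymbol{e}$ --- your argument only produces $e[\mathcal{P}]$ as a function on $V_{2}[\mathcal{P}]$; its Lipschitz continuity, which is what distinguishes the proposition from a triviality, remains unestablished. As it stands the proposal is a correct plan with the decisive lemma missing, not a proof.
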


Since $\mathfrak{T}_{a}[Y_{0}]=\mathfrak{T}[\mathcal{P}_{a}]$, 
Proposition~\ref{prop:holomorphic:Lipschitz} follows from 
Proposition~\ref{prop:handle_condition:Lipschitz} together with the fact that 
$\mathfrak{T}_{a}[Y_{0}]$ is closed. The set 
$\mathfrak{T}_{c}[Y_{0}]:=\mathfrak{T}[\mathcal{P}_{c}]$ possesses the same 
properties. In fact, setting 
$\mathfrak{T}_{\nu}[Y_{0}]=\mathfrak{T}[\mathcal{P}_{\nu}]$, we deduce the 
following proposition. 

\begin{prop}[\mbox{\rm \cite[Corollary~1]{MasumotoPreprint}}]
\label{prop:finite_degree:Lipschitz}
The sets\/ $\mathfrak{T}_{\nu}[Y_{0}]$, $\nu \in \mathbb{N}$, are closed 
domains with Lipschitz boundary, and are retracts of\/ $\mathfrak{T}$. 
\end{prop}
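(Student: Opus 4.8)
The plan is to deduce the whole statement from the handle-condition machinery already in hand, supplemented by a normal-families argument. Since $\mathfrak{T}_{\nu}[Y_{0}]=\mathfrak{T}[\mathcal{P}_{\nu}]$ and $\mathcal{P}_{\nu}$ has been observed to be a handle condition, Propositions~\ref{prop:handle_condition:Lipschitz} and~\ref{prop:handle_condition:graph} apply verbatim: assuming $\mathfrak{T}_{\nu}[Y_{0}] \neq \varnothing$, its interior $\mathfrak{T}^{\circ}_{\nu}[Y_{0}]$ is a domain, and under $\Lambda$ it is the strict epigraph $\{\boldsymbol{\zeta}+t\boldsymbol{e} \mid \boldsymbol{\zeta} \in V_{2}[\mathcal{P}_{\nu}],\ t>e[\mathcal{P}_{\nu}](\boldsymbol{\zeta})\}$ of a Lipschitz function $e[\mathcal{P}_{\nu}]$ on $V_{2}$. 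Hence only two things remain: that $\mathfrak{T}_{\nu}[Y_{0}]$ is closed and equal to the closure of its interior (so that it is a closed domain with Lipschitz boundary), and that it is a retract of $\mathfrak{T}$. The empty and full cases are covered by the convention in the Remark.

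For closedness I would take $X_{k} \to X$ in $\mathfrak{T}$ with $X_{k} \in \mathfrak{T}_{\nu}[Y_{0}]$ and holomorphic $f_{k}:X_{k} \to Y_{0}$ satisfying $d(f_{k}) \leqq \nu$, and produce a holomorphic $f:X \to Y_{0}$ with $d(f) \leqq \nu$. After transporting the $X_{k}$ to a fixed reference surface by their markings, the family $\{f_{k}\}$ is normal—when $Y_{0}$ is hyperbolic this is Montel's theorem, the torus case being handled exactly as in the proof that $\mathfrak{T}_{a}[Y_{0}]$ is closed—so a subsequence converges locally uniformly to some holomorphic $f:X \to Y_{0}$. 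Granting that $f$ is nonconstant, the bound $d(f) \leqq \nu$ follows from Hurwitz's theorem: if $f$ attained a value $p_{0}$ at $\nu+1$ distinct points, one would enclose them in disjoint disks on which, for large $k$, each $f_{k}$ would also attain $p_{0}$, forcing $d(f_{k}) \geqq \nu+1$.

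I expect the nonconstancy of $f$ to be the main obstacle, since it is precisely what Hurwitz's argument above requires and what actually places $X$ in $\mathfrak{T}_{\nu}[Y_{0}]$. One must show that the handle-preserving property—sending $a,b$ to loops freely homotopic to the marking loops of $Y_{0}$—survives the limit, so that $f$ is not homotopically trivial and therefore not constant; a uniform lower bound, such as the hyperbolic lengths of the images of the marking loops being bounded below by the corresponding geodesic lengths on $Y_{0}$, prevents any degeneration. With closedness established, the strict-epigraph description together with the upward monotonicity in the $\boldsymbol{e}$ direction supplied by the handle condition shows that $\Lambda(\mathfrak{T}_{\nu}[Y_{0}])$ is the full closed epigraph $\{\boldsymbol{\zeta}+t\boldsymbol{e} \mid \boldsymbol{\zeta} \in V_{2}[\mathcal{P}_{\nu}],\ t \geqq e[\mathcal{P}_{\nu}](\boldsymbol{\zeta})\}$, which is the closure of $\mathfrak{T}^{\circ}_{\nu}[Y_{0}]$; thus $\mathfrak{T}_{\nu}[Y_{0}]$ is a closed domain with Lipschitz boundary.

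Finally, for the retract I would use the graph to project along $\boldsymbol{e}$: set $r(\boldsymbol{\zeta}+t\boldsymbol{e})=\boldsymbol{\zeta}+\max\{t,\,e[\mathcal{P}_{\nu}](\boldsymbol{\zeta})\}\boldsymbol{e}$ on $\mathfrak{L}$ and transport it back by $\Lambda$. Since $e[\mathcal{P}_{\nu}]$ is continuous, $r$ is continuous; it fixes the closed epigraph pointwise and carries $\mathfrak{L}$ onto $\Lambda(\mathfrak{T}_{\nu}[Y_{0}])$. That the image stays inside $\mathfrak{L}$ is clear because $\mathfrak{L}=\{\boldsymbol{x} \mid Q(\boldsymbol{x})+4 \leqq 0\}$ is itself the epigraph of $\boldsymbol{\zeta} \mapsto \sqrt{2|\boldsymbol{\zeta}|^{2}+4\,}$ in the $\boldsymbol{e}$ direction, hence upward closed. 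Transported back through the diffeomorphism $\Lambda$, this yields a retraction $\mathfrak{T} \to \mathfrak{T}_{\nu}[Y_{0}]$, completing the proof.
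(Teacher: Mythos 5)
Your proposal is correct and takes essentially the same route as the paper, which obtains this proposition simply by noting that $\mathcal{P}_{\nu}$ is a handle condition and invoking Propositions~\ref{prop:handle_condition:Lipschitz} and~\ref{prop:handle_condition:graph}, deferring the closedness and retract details to the cited preprint \cite{MasumotoPreprint}. Your normal-families/Hurwitz argument for closedness (with the homotopy class of the image marking loops surviving the locally uniform limit to rule out a constant limit) and the vertical projection along $\boldsymbol{e}$ for the retraction are exactly the kind of details that source supplies.
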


Every marked once-holed torus is realized as a horizontal slit torus (see 
Shiba \cite{Shiba1987}). Specifically, for each point $\tau$ in the upper 
half-plane $\mathbb{H}$, let $G_{\tau}$ be the additive subgroup of 
$\mathbb{C}$ generated by $1$ and $\tau$. Then $T_{\tau}:=\mathbb{C}/G_{\tau}$ 
is a torus. Let $\pi_{\tau}:\mathbb{C} \to T_{\tau}$ be the natural projection, 
and set $a_{\tau}=\pi_{\tau}([0,1])$ and $b_{\tau}=\pi_{\tau}([0,\tau])$, where 
$[z_{1},z_{2}]$ stands for the oriented line segment joining $z_{1}$ with 
$z_{2}$; if $z_{1}=z_{2}$, then $[z_{1},z_{2}]$ denotes the singleton 
$\{z_{1}\}$. Then $\chi_{\tau}:=\{a_{\tau},b_{\tau}\}$ is a mark of handle of 
$T_{\tau}$, and we obtain a marked torus $X_{\tau}:=(T_{\tau},\chi_{\tau})$. 

Now, for $s \in [0,1)$ set 
$T_{\tau}^{(s)}=T_{\tau} \setminus \pi_{\tau}([0,s])$; it is a once-holed 
torus. We choose a mark 
$\chi_{\tau}^{(s)}=\bigl\{a_{\tau}^{(s)},b_{\tau}^{(s)}\bigr\}$ of handle of 
$T_{\tau}^{(s)}$ so that the inclusion mapping of $T_{\tau}^{(s)}$ into 
$T_{\tau}$ is a conformal mapping of 
$X_{\tau}^{(s)}:=\bigl(T_{\tau}^{(s)},\chi_{\tau}^{(s)}\bigr)$ into $X_{\tau}$. 
Then the correspondence $(\tau,s) \mapsto X_{\tau}^{(s)}$ is a homeomorphism of 
$\mathbb{H} \times [0,1)$ onto $\mathfrak{T}$, which is a real-analytic 
diffeomorphism on $\mathbb{H} \times (0,1)$ (see \cite[\S4]{MasumotoPreprint}). 
In other words, its inverse $\Sigma:X_{\tau}^{(s)} \mapsto (\tau,s)$ serves as 
a global topological coordinate system on $\mathfrak{T}$. 

Let $\Pi:\mathbb{H} \times [0,1) \to \mathbb{H}$ be the natural projection. 
Then for any handle condition $\mathcal{P}(X)$ the image 
$\mathbb{H}[\mathcal{P}]:=\Pi \circ \Sigma(\mathfrak{T}[\mathcal{P}])$ is a 
horizontal strip. To be more precise for 
$t \in \bar{\mathbb{R}}_{+}:=\mathbb{R}_{+} \cup \{+\infty\}=[0,+\infty]$ set 
$\mathcal{H}(t)=\{t \in \mathbb{C} \mid 0<\im z<t\}$ and let 
$\bar{\mathcal{H}}(t)$ denote its closure in $\mathbb{H}$. Note that 
$\mathcal{H}(0)=\bar{\mathcal{H}}(0)=\varnothing$ and 
$\mathcal{H}(+\infty)=\bar{\mathcal{H}}(+\infty)=\mathbb{H}$. 

\begin{prop}[\mbox{\rm \cite[Theorem~4]{MasumotoPreprint}}]
\label{prop:handle_condition:strip}
For every handle condition $\mathcal{P}(X)$ there exists a constant 
$\lambda[\mathcal{P}] \in \bar{\mathbb{R}}_{+}$ such that 
$$
\mathcal{H}\biggl(\frac{1}{\,\lambda[\mathcal{P}]\,}\biggr) \subset
\mathbb{H}[\mathcal{P}] \subset
\bar{\mathcal{H}}\biggl(\frac{1}{\,\lambda[\mathcal{P}]\,}\biggr),
$$
where $1/0=+\infty$ and $1/(+\infty)=0$. 
\end{prop}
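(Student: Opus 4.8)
The plan is to show that membership of $\tau$ in $\mathbb{H}[\mathcal{P}]$ is governed solely by $\im\tau$, with a single threshold $1/\lambda[\mathcal{P}]$, and that strictly below the threshold $\tau$ always lies in $\mathbb{H}[\mathcal{P}]$. First I would record the vertical monotonicity built into the coordinate system: for fixed $\tau$ and $s\leqq s'$ the inclusion $T_\tau^{(s')}\subset T_\tau^{(s)}$ gives $X_\tau^{(s')}\preceq X_\tau^{(s)}$, so that $\mathcal{P}(X_\tau^{(s)})$ forces $\mathcal{P}(X_\tau^{(s')})$ for every larger $s'$. Hence $\tau\in\mathbb{H}[\mathcal{P}]$ if and only if $\mathcal{P}(X_\tau^{(s)})$ holds for all $s$ sufficiently close to $1$. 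I would then \emph{define} $1/\lambda[\mathcal{P}]:=\sup\{\im\tau\mid\tau\in\mathbb{H}[\mathcal{P}]\}$, interpreted as $0$ when $\mathbb{H}[\mathcal{P}]=\varnothing$ and as $+\infty$ when the supremum is unbounded. With the conventions $1/0=+\infty$ and $1/(+\infty)=0$, the upper inclusion $\mathbb{H}[\mathcal{P}]\subset\bar{\mathcal{H}}(1/\lambda[\mathcal{P}])$ is then immediate.

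The substance is the lower inclusion $\mathcal{H}(1/\lambda[\mathcal{P}])\subset\mathbb{H}[\mathcal{P}]$, which I would obtain from a horizontal monotonicity lemma: if $\sigma\in\mathbb{H}[\mathcal{P}]$ and $\tau\in\mathbb{H}$ satisfies $\im\tau<\im\sigma$, with no constraint on the real parts, then $\tau\in\mathbb{H}[\mathcal{P}]$. Granting this, any $\tau$ with $\im\tau<1/\lambda[\mathcal{P}]$ admits, by the definition of the supremum, some $\sigma\in\mathbb{H}[\mathcal{P}]$ with $\im\sigma>\im\tau$, whence $\tau\in\mathbb{H}[\mathcal{P}]$. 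Because $\mathcal{P}$ is a handle condition, the lemma reduces to a purely conformal-geometric assertion, which is the heart of the matter: choosing $s_\sigma$ with $\mathcal{P}(X_\sigma^{(s_\sigma)})$, one should exhibit, for all $s_\tau$ close to $1$, a handle-preserving conformal embedding realizing $X_\tau^{(s_\tau)}\preceq X_\sigma^{(s_\sigma)}$; then $\mathcal{P}(X_\sigma^{(s_\sigma)})$ propagates downward to $X_\tau^{(s_\tau)}$ and yields $\tau\in\mathbb{H}[\mathcal{P}]$.

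To produce this embedding I would work in Shiba's horizontal-slit realization \cite{Shiba1987} and invoke the extremal-length order criterion of \cite{Masumoto1995}. Since $\lambda(a_\tau)=1/\im\tau$, the hypothesis $\im\tau<\im\sigma$ says exactly that the $a$-curve of $X_\tau^{(0)}$ is already extremally longer than that of $X_\sigma^{(0)}$, and enlarging the slit drives the extremal lengths of $X_\tau^{(s_\tau)}$ to infinity as $s_\tau\to1$. Crucially, the criterion for $\preceq$ is not simply that each extremal length be larger but a finite system mixing such inequalities, reflecting the indefinite form $Q$; it is precisely this feature that makes the imaginary-part hypothesis genuinely necessary, and the task is to check that the system becomes solvable for $s_\tau$ near $1$ exactly when $\im\tau<\im\sigma$. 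The main obstacle, where I expect the real work to lie, is this verification carried out \emph{uniformly in the real parts} of $\tau$ and $\sigma$ — this uniformity is what upgrades monotonicity along imaginary lines to a genuine horizontal strip — together with matching the marking $\{a_\tau,b_\tau\}$ to $\{a_\sigma,b_\sigma\}$. I would attempt an explicit regluing for this: cut $X_\sigma^{(s_\sigma)}$ along a horizontal curve lying above the slit to expose a cylinder about $a_\sigma$ of modulus close to $\im\sigma$, fit inside it a horizontally-slit torus of the shorter modulus $\tau$, absorb the real-part discrepancy by a rotation of the cylinder, and absorb the surplus height into a long slit. The edge cases $\mathbb{H}[\mathcal{P}]=\varnothing$ and $\mathbb{H}[\mathcal{P}]=\mathbb{H}$ are then disposed of directly by the stated conventions.
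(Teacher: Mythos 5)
First, a point of comparison: the paper does not prove this proposition at all; it is quoted verbatim from \cite[Theorem~4]{MasumotoPreprint}, so there is no in-paper argument to measure yours against. On its own terms, your skeleton is the right one and surely mirrors the source: vertical monotonicity in $s$ (valid, since the inclusion $T_{\tau}^{(s')} \subset T_{\tau}^{(s)}$ is handle-preserving by the very definition of $\chi_{\tau}^{(s)}$), the definition of $1/\lambda[\mathcal{P}]$ as $\sup\{\im\tau \mid \tau \in \mathbb{H}[\mathcal{P}]\}$, the immediate upper inclusion, and the reduction of the lower inclusion to the single lemma: if $\im\tau < \im\sigma$ and $\mathcal{P}\bigl(X_{\sigma}^{(s_{\sigma})}\bigr)$ holds, then $X_{\tau}^{(s_{\tau})} \preceq X_{\sigma}^{(s_{\sigma})}$ for $s_{\tau}$ near $1$. (One slip along the way: not all extremal lengths of $X_{\tau}^{(s_{\tau})}$ blow up as $s_{\tau} \to 1$; the paper notes in \S5 that $\lambda\bigl(X_{\tau}^{(s)}\bigr) = 1/\im\tau$ is independent of $s$, and only the classes of $b$ and $ab^{-1}$ degenerate.)

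The genuine gap is that this lemma is the entire content of the proposition, and the construction you sketch for it fails as described. You propose to ``cut $X_{\sigma}^{(s_{\sigma})}$ along a horizontal curve \dots to expose a cylinder about $a_{\sigma}$'' and then ``fit inside it a horizontally-slit torus of the shorter modulus $\tau$.'' A once-holed torus has genus one and cannot be embedded in an annulus; cutting $X_{\sigma}^{(s_{\sigma})}$ along a simple loop homotopic to $a_{\sigma}$ leaves a planar piece, and the top circle of a sub-annulus $\pi_{\sigma}(\{0<\im z<\im\tau\})$ is interior to the surface, so it cannot simply be re-glued to the bottom. The handle of the embedded copy must instead be created by threading a thin bridge from the top edge of that sub-annulus, through the unslit arc $\pi_{\sigma}((s_{\sigma},1))$, back to the bottom edge; it is precisely this bridge --- its winding number around the $a_{\sigma}$-annulus and the way it consumes the surplus modulus $\im\sigma-\im\tau$ --- that realizes an arbitrary real part of $\tau$ and matches the $b$-markings, whereas a ``rotation of the cylinder'' does neither. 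Even granting such a bridge, the resulting subsurface is only approximately $X_{\tau}^{(s_{\tau})}$; hitting the prescribed $\tau$ exactly requires either a continuity-plus-winding argument over the family of bridges or an appeal to the explicit embedding criterion of \cite{Masumoto1995} with the asymptotics as $s_{\tau}\to 1$ actually checked. Until one of these is carried out, the lower inclusion $\mathcal{H}(1/\lambda[\mathcal{P}]) \subset \mathbb{H}[\mathcal{P}]$ remains unproved.
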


We set $\lambda_{a}[Y_{0}]=\lambda[\mathcal{P}_{a}]$ and 
$\lambda_{c}[Y_{0}]=\lambda[\mathcal{P}_{c}]$. They are referred to as the 
{\em critical extremal lengths\/} for the existence of holomorphic and 
conformal mappings of marked once-holed tori into $Y_{0}$, respectively. Most 
part of Proposition~\ref{prop:holomorphic:critical_extremal} follows from 
Proposition~\ref{prop:handle_condition:strip}. Note, however, that 
Proposition~\ref{prop:holomorphic:critical_extremal} asserts that the identity 
$\mathbb{H}[\mathcal{P}_{a}]=\mathcal{H}(1/\lambda[\mathcal{P}_{a}])$ actually 
holds. On the other hand, in general,$ \mathcal{H}(1/\lambda[\mathcal{P}_{c}])$ 
is a proper subset of $\mathbb{H}[\mathcal{P}_{c}]$ (see 
\cite[Example~13]{MasumotoPreprint}). 

\section{Non-smoothness of boundaries}
\label{sec:nonsmoothness}

Propositions~\ref{prop:holomorphic:Lipschitz} 
and~\ref{prop:finite_degree:Lipschitz} show that $\mathfrak{T}_{a}[Y_{0}]$ and 
$\mathfrak{T}_{\nu}[Y_{0}]$ have Lipschitz boundaries. It is then natural to 
ask whether the boundaries are smooth or not. As for 
$\mathfrak{T}_{c}[Y_{0}]=\mathfrak{T}_{1}[Y_{0}]$ we know that the answer is 
negative in general. In fact, if $Y_{0}$ is a marked once-holed torus, then 
$\Lambda(\mathfrak{T}_{c}[Y_{0}])$ is a cone with vertex at $\Lambda(Y_{0})$ 
and hence the boundary of $\mathfrak{T}_{c}[Y_{0}]$ is not smooth at $Y_{0}$ 
(see \cite[Example~10]{MasumotoPreprint}). Our first result, 
Theorem~\ref{thm:holomorphic:nonsmooth}, claims that the boundary of 
$\mathfrak{T}_{a}[Y_{0}]$ is not, either, for most cases. 

For the proof of Theorem~\ref{thm:holomorphic:nonsmooth} we define the handle 
covering surface of a Riemann surface $Y_{0}$ with marked handle. There is a 
Riemann surface $\tilde{Y}_{0}=(\tilde{R}_{0},\tilde{\chi}_{0})$ with marked 
handle together with a holomorphic mapping $\pi_{0}:\tilde{Y}_{0} \to Y_{0}$ 
such that 
\begin{list}
{{\rm (\roman{claim})}}{\usecounter{claim}
\setlength{\itemsep}{0pt}
\setlength{\parsep}{0pt}
\setlength{\labelwidth}{\leftmargin}}
\item the fundamental group of $\tilde{R}_{0}$ is generated by the loops in 
$\tilde{\chi}_{0}$, and 

\item $\pi_{0}:\tilde{R}_{0} \to R_{0}$ is a covering map. 
\end{list}
We call $\tilde{Y}_{0}$ the {\em handle covering surface\/} of $Y_{0}$. The 
following lemma is easily verified. 

\begin{lem}
\label{lem:handle_covering}
Let $Y_{0}$ be a Riemann surface of marked handle and $\tilde{Y}_{0}$ its 
handle covering surface. 
\begin{list}
{{\rm (\roman{claim})}}{\usecounter{claim}
\setlength{\topsep}{0pt}
\setlength{\itemsep}{0pt}
\setlength{\parsep}{0pt}
\setlength{\labelwidth}{\leftmargin}}
\item If $Y_{0}$ is not a marked torus, then $\tilde{Y}_{0}$ is a marked 
once-holed torus. If $Y_{0}$ is a marked torus or a marked once-holed torus, 
then $\tilde{Y}_{0}=Y_{0}$. 

\item If $\tilde{Y}_{0}$ is a marked once-punctured torus, then so is $Y_{0}$. 

\item $\tilde{Y}_{0} \in \mathfrak{T}_{a}[Y_{0}]$. 

\item $\mathfrak{T}_{a}[\tilde{Y}_{0}]=\mathfrak{T}_{a}[Y_{0}]$. 
\end{list}
\end{lem}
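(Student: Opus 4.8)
The plan is to verify each of the four assertions by unwinding the definition of the handle covering surface $\tilde{Y}_{0}$ together with its defining covering map $\pi_{0}:\tilde{Y}_{0} \to Y_{0}$. For assertion~(i), I would argue topologically: since the fundamental group of $\tilde{R}_{0}$ is generated by the two loops in $\tilde{\chi}_{0}$, whose geometric intersection number is one, $\tilde{R}_{0}$ is a surface of genus one whose fundamental group is free of rank at most two. If $Y_{0}$ is not a torus, then $\tilde{R}_{0}$ cannot be compact (a compact genus-one surface has abelian $\pi_{1} \cong \mathbb{Z}^{2}$, forcing the cover to be trivial and $Y_{0}$ itself a torus), so $\tilde{R}_{0}$ is an open genus-one surface with a single end, that is, a once-holed torus. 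When $Y_{0}$ is already a torus or once-holed torus, the loops in $\chi_{0}$ already generate $\pi_{1}(R_{0})$, so $\pi_{0}$ must be a one-sheeted covering, giving $\tilde{Y}_{0}=Y_{0}$.

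For assertion~(ii), I would use the classification of once-holed tori into the once-punctured case ($\Lambda$ landing on $\partial\mathfrak{L}$) versus the genuinely bordered case ($\Lambda$ landing in $\mathfrak{L}^{\circ}$). If $\tilde{Y}_{0}$ is once-punctured, the covering map $\pi_{0}$ is finite-to-one on the handle, and I would check that the covered surface $Y_{0}$ inherits the puncture structure; concretely, a once-punctured torus is conformally a torus minus a point, and the covering relation forces $Y_{0}$ to be of the same conformal type near its single end. Assertion~(iii) is the most direct: the covering map $\pi_{0}:\tilde{Y}_{0} \to Y_{0}$ is itself a holomorphic mapping of $\tilde{Y}_{0}$ into $Y_{0}$ that preserves the marked handle by construction, so $\tilde{Y}_{0} \in \mathfrak{T}_{a}[Y_{0}]$ by the very definition of $\mathfrak{T}_{a}[Y_{0}]$.

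Assertion~(iv) is the crux and I expect it to be the main obstacle, since it requires a double inclusion. The inclusion $\mathfrak{T}_{a}[\tilde{Y}_{0}] \subseteq \mathfrak{T}_{a}[Y_{0}]$ follows by composition: if $X \in \mathfrak{T}_{a}[\tilde{Y}_{0}]$ with a holomorphic handle-preserving mapping $g:X \to \tilde{Y}_{0}$, then $\pi_{0} \circ g:X \to Y_{0}$ is holomorphic and still handle-preserving, so $X \in \mathfrak{T}_{a}[Y_{0}]$. The reverse inclusion is harder: given $f:X \to Y_{0}$ holomorphic and handle-preserving, I must lift $f$ through the covering $\pi_{0}$ to obtain $\tilde{f}:X \to \tilde{Y}_{0}$. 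Here I would invoke the lifting criterion for covering maps: because $X$ is a once-holed torus whose marked handle $f$ sends to the handle of $Y_{0}$, and because $\pi_{0}$ corresponds precisely to the subgroup of $\pi_{1}(R_{0})$ generated by the loops in $\chi_{0}$, the image $f_{*}(\pi_{1}(X))$ lands inside that subgroup. The point is that $\pi_{1}(X)$ is generated by the two handle loops (and the boundary/puncture loop, which is a commutator of them), so the handle-preserving hypothesis guarantees $f_{*}$ maps into the correct subgroup, making the lift exist. Verifying that this lift is again handle-preserving and holomorphic is then routine, completing $\mathfrak{T}_{a}[Y_{0}] \subseteq \mathfrak{T}_{a}[\tilde{Y}_{0}]$ and hence the equality.
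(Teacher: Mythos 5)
The paper itself offers no argument for this lemma (it is dismissed as ``easily verified''), so the only question is whether your proposal actually closes it. Parts (i) and (iii) and the inclusion $\mathfrak{T}_{a}[\tilde{Y}_{0}] \subseteq \mathfrak{T}_{a}[Y_{0}]$ are fine. The problem is exactly at the step you yourself call the crux: the lifting in (iv). The paper's notion of a handle-preserving map only requires $f(a)$ and $f(b)$ to be \emph{freely} homotopic to $a_{0}$ and $b_{0}$. Translating free homotopy into $\pi_{1}$ gives $f_{*}[a]=g_{1}[a_{0}]g_{1}^{-1}$ and $f_{*}[b]=g_{2}[b_{0}]g_{2}^{-1}$ for two \emph{independent} elements $g_{1},g_{2}\in\pi_{1}(R_{0})$, so $f_{*}(\pi_{1}(X))$ is conjugate to $\langle [a_{0}],\,g[b_{0}]g^{-1}\rangle$ with $g=g_{1}^{-1}g_{2}$, and such a subgroup need not lie in any conjugate of $H=\langle [a_{0}],[b_{0}]\rangle$. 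This is not base-point pedantry: if $R_{0}$ is a closed genus-two surface with standard generators $A,B,C,D$ and $a_{0},b_{0}$ represent $A,B$, the homomorphism $[a]\mapsto A$, $[b]\mapsto CBC^{-1}$ is realized by a continuous handle-preserving map of a once-holed torus into $R_{0}$ (map the spine $a\vee b$ accordingly and extend over a deformation retraction), and $\langle A, CBC^{-1}\rangle$ is contained in no conjugate of $\langle A,B\rangle$, so that map admits no lift to the handle cover. Hence ``handle-preserving'' alone does not deliver the lifting criterion; you must either show that a \emph{holomorphic} handle-preserving map cannot exhibit this twisting, or replace the given $f$ by a different holomorphic map that does lift. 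Your proposal supplies neither, and this missing step is precisely what makes (iv) nontrivial.

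Two smaller points. In (ii), the phrase ``the covering relation forces $Y_{0}$ to be of the same conformal type near its single end'' assumes what is to be proved; the honest argument is that a puncture on $\tilde{Y}_{0}$ means the commutator $[\tilde{a}_{0},\tilde{b}_{0}]$ is parabolic, that parabolicity is detected in the Fuchsian group of $R_{0}$ because the two surfaces share the universal cover $\mathbb{H}$, and that a parabolic $[a_{0},b_{0}]$ forces the boundary of a regular neighborhood of $a_{0}\cup b_{0}$ to cut off a once-punctured disk, so that $R_{0}$ is itself a once-punctured torus. In (i) your count is correct (free rank two together with genus one gives Euler characteristic $-1$, hence a single Ker\'{e}kj\'{a}rt\'{o}--Sto\"{\i}low boundary component), but ``forcing the cover to be trivial'' should read ``forcing $R_{0}$ to be a torus,'' since a torus can cover a torus nontrivially.
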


\begin{proof}[Proof of Theorem~\ref{thm:holomorphic:nonsmooth}]
By Lemma~\ref{lem:handle_covering}~(iv) we may assume from the outset that 
$Y_{0}$ is an element of $\mathfrak{T}^{\circ}$. We employ Fenchel-Nielsen 
coordinates (see Buser \cite{Buser1992}). To be specific, let 
$X=(T,\chi) \in \mathfrak{T}$, where $\chi=\{a,b\}$. The once-holed torus $T$ 
carries a hyperbolic metric, whose curvature is normalized to be $-1$. Denote 
by $l(X)$ the length of the hyperbolic geodesic $\alpha$ freely homotopic to 
$a$. Let $\theta(X)$ stand for the twist parameter along $\alpha$. Also, let 
$l'(X)$ be the infimum of hyperbolic lengths of loops freely homotopic to 
$aba^{-1}b^{-1}$. Clearly, $l'(X)$ vanishes if and only if $X$ is a marked 
once-punctured torus. Setting $\Phi(X)=\bigl(l(X),l'(X),\theta(X)\bigr)$, we 
obtain a homeomorphism of $\mathfrak{T}$ onto 
$(0,+\infty) \times [0,+\infty) \times \mathbb{R}$, which is a real-analytic 
diffeomorphism of $\mathfrak{T}^{\circ}$ onto 
$(0,+\infty) \times (0,+\infty) \times \mathbb{R}$. 

If $X \in \mathfrak{T}_{a}[Y_{0}]$, then $l(X) \geqq l(Y_{0})$ and 
$l'(X) \geqq l'(Y_{0})$ since holomorphic mappings decrease hyperbolic metrics. 
As $Y_{0} \in \mathfrak{T}_{a}[Y_{0}]$, these inequalities imply that $Y_{0}$ 
lies on the boundary $\partial\mathfrak{T}_{a}[Y_{0}]$ and that 
$\partial\mathfrak{T}_{a}[Y_{0}]$ is not smooth at $Y_{0}$, provided that 
$Y_{0} \not\in \partial\mathfrak{T}$. Theorem~\ref{thm:holomorphic:nonsmooth} 
has been thus established. 
\end{proof}

\begin{rem}
If $Y_{0}$ is a marked torus, then $\mathfrak{T}_{a}[Y_{0}]$ coincides with the 
whole space $\mathfrak{T}$ (see \cite[Example~9]{MasumotoPreprint}). Thus its 
boundary is an empty set. For marked once-punctured tori $Y_{0}$ the boundary 
of $\Phi(\mathfrak{T}_{a}[Y_{0}])$ is not smooth. However, we do not know 
whether the boundary of $\Lambda(\mathfrak{T}_{a}[Y_{0}])$ is smooth or not. 
\end{rem}

It is obvious that 
$$
\mathfrak{T}_{c}[Y_{0}]=\mathfrak{T}_{1}[Y_{0}] \subset \cdots \subset
\mathfrak{T}_{\nu}[Y_{0}] \subset \mathfrak{T}_{\nu+1}[Y_{0}] \subset \cdots
\subset \mathfrak{T}_{a}[Y_{0}].
$$
If $Y_{0} \in \mathfrak{T}^{\circ}$, then both of 
$\partial\mathfrak{T}_{c}[Y_{0}]$ and $\partial\mathfrak{T}_{a}[Y_{0}]$ 
contains $Y_{0}$ and are not smooth at $Y_{0}$. We thus have the following 
corollary to Theorem~\ref{thm:holomorphic:nonsmooth}. 

\begin{cor}
\label{cor:nonsmooth}
If $Y_{0}$ is a marked once-holed torus which is not a marked once-punctured 
torus, then for any positive integer $\nu$ the boundary of\/ 
$\mathfrak{T}_{\nu}[Y_{0}]$ is not smooth. 
\end{cor}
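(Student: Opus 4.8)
The plan is to repeat the proof of Theorem~\ref{thm:holomorphic:nonsmooth}, with $\mathfrak{T}_{\nu}[Y_{0}]$ playing the role of $\mathfrak{T}_{a}[Y_{0}]$. Since $Y_{0}$ is a marked once-holed torus that is not once-punctured, it already lies in $\mathfrak{T}^{\circ}$, so no handle-covering reduction is needed and I may work directly with the Fenchel--Nielsen coordinates $\Phi(X)=(l(X),l'(X),\theta(X))$, which restrict to a real-analytic diffeomorphism of $\mathfrak{T}^{\circ}$ onto $(0,+\infty)\times(0,+\infty)\times\mathbb{R}$. As $\Phi$ is a diffeomorphism near $Y_{0}$, smoothness of $\partial\mathfrak{T}_{\nu}[Y_{0}]$ at $Y_{0}$ is equivalent to smoothness of $\partial\Phi(\mathfrak{T}_{\nu}[Y_{0}])$ at $\Phi(Y_{0})$, so it suffices to disprove the latter.

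The two ingredients come from the two ends of the inclusion chain $\mathfrak{T}_{c}[Y_{0}]=\mathfrak{T}_{1}[Y_{0}]\subseteq\mathfrak{T}_{\nu}[Y_{0}]\subseteq\mathfrak{T}_{a}[Y_{0}]$. From the left end, the identity is a conformal self-mapping of $Y_{0}$, whence $Y_{0}\in\mathfrak{T}_{c}[Y_{0}]\subseteq\mathfrak{T}_{\nu}[Y_{0}]$; recall also that $\mathfrak{T}_{\nu}[Y_{0}]$ is closed by Proposition~\ref{prop:finite_degree:Lipschitz}. From the right end, every $X\in\mathfrak{T}_{\nu}[Y_{0}]\subseteq\mathfrak{T}_{a}[Y_{0}]$ admits a holomorphic mapping into $Y_{0}$, and since holomorphic mappings decrease hyperbolic lengths we get $l(X)\geqq l(Y_{0})$ and $l'(X)\geqq l'(Y_{0})$. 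Hence $\Phi(\mathfrak{T}_{\nu}[Y_{0}])$ is contained in the wedge $W=\{(l,l',\theta):l\geqq l(Y_{0}),\ l'\geqq l'(Y_{0})\}$, while $\Phi(Y_{0})$ sits on the edge of $W$ where both inequalities become equalities. In particular arbitrarily close to $Y_{0}$ there are points with $l<l(Y_{0})$, none lying in $\mathfrak{T}_{\nu}[Y_{0}]$, so $Y_{0}$ is not interior; being a point of the closed set $\mathfrak{T}_{\nu}[Y_{0}]$, it lies in $\partial\mathfrak{T}_{\nu}[Y_{0}]$.

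The substantive step, which I expect to be the main obstacle, is to deduce that $\partial\mathfrak{T}_{\nu}[Y_{0}]$ is not smooth at $Y_{0}$; this I would argue by tangent cones, exactly as in Theorem~\ref{thm:holomorphic:nonsmooth}. Were the boundary smooth at $Y_{0}$, the tangent cone of $\Phi(\mathfrak{T}_{\nu}[Y_{0}])$ at $\Phi(Y_{0})$ would be a half-space. But that tangent cone is contained in the tangent cone of $W$ at its edge point $\Phi(Y_{0})$, namely the wedge $\{\xi\geqq0,\ \eta\geqq0\}$ in the local coordinates $\xi=l-l(Y_{0})$, $\eta=l'-l'(Y_{0})$, $\zeta=\theta-\theta(Y_{0})$; this wedge contains no $2$-plane through the origin and hence no half-space, a contradiction. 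The only point demanding care is the transversality of the two constraint surfaces $\{l=l(Y_{0})\}$ and $\{l'=l'(Y_{0})\}$ at $Y_{0}$: it holds precisely because $l'(Y_{0})>0$, so that $\Phi(Y_{0})$ lies in the region $(0,+\infty)\times(0,+\infty)\times\mathbb{R}$ on which $\Phi$ is a diffeomorphism, and it is here that the hypothesis that $Y_{0}$ is not once-punctured is used.
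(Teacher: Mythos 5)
Your proposal is correct and follows essentially the same route as the paper: the paper also sandwiches $\mathfrak{T}_{\nu}[Y_{0}]$ between $\mathfrak{T}_{c}[Y_{0}]$ (which puts $Y_{0}$ in the set) and $\mathfrak{T}_{a}[Y_{0}]$ (which puts the set inside the wedge $l\geqq l(Y_{0})$, $l'\geqq l'(Y_{0})$), and concludes non-smoothness at the corner $Y_{0}$ exactly as in the proof of Theorem~\ref{thm:holomorphic:nonsmooth}. You merely make explicit the tangent-cone step that the paper leaves implicit.
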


\section{Hyperbolic length spectra}
\label{sec:length_spectra}

Let $W$ be a free group generated by two elements. We regard it as the set of 
reduced words $w(u,v)$ of two letters $u$ and $v$. The unit is the void word. 
We denote by $W^{*}$ the subset of non-unit elements. 

In general, let $Y=(R,\chi)$, where $\chi=\{a,b\}$, be a Riemann surface with 
marked handle. For $w=w(u,v) \in W^{*}$ the notation $w(a,b)$ denotes a loop on 
$R$. We set $w(Y)=w(a,b)$. In particular, $u(Y)=a$ and $v(Y)=b$. Let $l(Y,w)$ 
be the infimum of the hyperbolic lengths of loops in the free homotopy class 
$\Gamma(Y,w)$ of $w(Y)$ on $R$, provided that $R$ is not a torus. In the case 
where $R$ is a torus, we set $l(Y,w)=0$ for convenience. 

Now, fix a Riemann surface $Y_{0}$ with marked handle, and let 
$\mathfrak{T}_{\sigma}[Y_{0}]$ be the set of $X \in \mathfrak{T}$ for which 
$l(X,w) \geqq l(Y_{0},w)$ for all $w \in W^{*}$. Since holomorphic mappings 
decrease hyperbolic lengths, $\mathfrak{T}_{a}[Y_{0}]$ is included in 
$\mathfrak{T}_{\sigma}[Y_{0}]$. It follows from Bourque \cite{Bourque2016} that 
$\mathfrak{T}_{a}[Y_{0}]$ is in general a proper subset of 
$\mathfrak{T}_{\sigma}[Y_{0}]$. The following theorem claims more: 

\begin{thm}
\label{thm:length_spectra}
Let $Y_{0}$ be a Riemann surface with marked handle which is not a marked 
torus. Then 
\begin{list}
{{\rm (\roman{claim})}}{\usecounter{claim}
\setlength{\topsep}{0pt}
\setlength{\itemsep}{0pt}
\setlength{\parsep}{0pt}
\setlength{\labelwidth}{\leftmargin}}
\item $\mathfrak{T}_{\sigma}[Y_{0}]$ is a closed domain with Lipschitz 
boundary, 

\item its boundary $\partial\mathfrak{T}_{\sigma}[Y_{0}]$ meets 
$\mathfrak{T}_{a}[Y_{0}]$ exactly  at one point, and 

\item $\mathfrak{T}_{\sigma}[Y_{0}] \setminus \mathfrak{T}_{a}[Y_{0}]$ is 
homeomorphic to $\mathbb{C}^{*} \times [0,1)$, where 
$\mathbb{C}^{*}=\mathbb{C} \setminus \{0\}$. 
\end{list}
\end{thm}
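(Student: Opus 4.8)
The plan is to analyze $\mathfrak{T}_{\sigma}[Y_{0}]$ through the same Fenchel-Nielsen coordinate system $\Phi(X)=\bigl(l(X),l'(X),\theta(X)\bigr)$ introduced in the proof of Theorem~\ref{thm:holomorphic:nonsmooth}, which identifies $\mathfrak{T}$ with $(0,+\infty) \times [0,+\infty) \times \mathbb{R}$. The key observation is that $\mathfrak{T}_{\sigma}[Y_{0}]$ is defined purely by the family of length inequalities $l(X,w) \geqq l(Y_{0},w)$ for $w \in W^{*}$, and each such condition is a handle condition in the sense of \S2 (a conformal embedding $X \preceq X'$ decreases hyperbolic lengths in each free homotopy class). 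Thus $\mathfrak{T}_{\sigma}[Y_{0}]=\bigcap_{w} \mathfrak{T}[\mathcal{P}_{w}]$ is an intersection of sets each satisfying Proposition~\ref{prop:handle_condition:Lipschitz}. For part~(i) I would first verify that $\mathfrak{T}_{\sigma}[Y_{0}]$ is closed (lengths depend continuously on $X$) and nonempty (it contains $Y_{0}$ itself), then show that the defining condition can be expressed via Proposition~\ref{prop:handle_condition:graph} as the region above a graph over $V_{2}$; the intersection of graph-type regions $t > e[\mathcal{P}_{w}](\boldsymbol{\zeta})$ is again a region above the supremum $\sup_{w} e[\mathcal{P}_{w}]$, and I would establish that this supremum is still Lipschitz, giving the Lipschitz boundary.

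For part~(ii) the strategy rests on the two length functions $l$ and $l'$ that played the decisive role in Theorem~\ref{thm:holomorphic:nonsmooth}. Since $w=u$ gives $l(X,u)=l(X)$ and $w=uvu^{-1}v^{-1}$ gives $l(X,uvu^{-1}v^{-1})=l'(X)$, membership in $\mathfrak{T}_{\sigma}[Y_{0}]$ forces $l(X) \geqq l(Y_{0})$ and $l'(X) \geqq l'(Y_{0})$, exactly the two inequalities already shown to bound $\mathfrak{T}_{a}[Y_{0}]$. I would argue that $Y_{0}$ itself is the unique common point: on $\partial\mathfrak{T}_{\sigma}[Y_{0}]$ at least one length inequality is an equality, and I expect to show that the only point where the boundary of the larger set $\mathfrak{T}_{\sigma}[Y_{0}]$ can touch the subset $\mathfrak{T}_{a}[Y_{0}]$ is where all the binding constraints simultaneously collapse, which happens precisely at $X=Y_{0}$. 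The containment $\mathfrak{T}_{a}[Y_{0}] \subset \mathfrak{T}_{\sigma}[Y_{0}]$ is given, and $Y_{0}$ lies on $\partial\mathfrak{T}_{a}[Y_{0}]$; the point is to rule out any other boundary contact, using the strict-monotonicity/rigidity phenomenon behind the Bourque strictness result.

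For part~(iii) the plan is topological. Having the Lipschitz graph descriptions of both $\mathfrak{T}_{\sigma}[Y_{0}]$ and $\mathfrak{T}_{a}[Y_{0}]$ as regions $\{t \geqq e[\cdot](\boldsymbol{\zeta})\}$ over $V_{2} \cong \mathbb{R}^{2} \cong \mathbb{C}$, I would describe the difference set as the region strictly between the two graphs, i.e. $\{(\boldsymbol{\zeta},t) \mid e_{\sigma}(\boldsymbol{\zeta}) \leqq t < e_{a}(\boldsymbol{\zeta})\}$ in appropriate coordinates, and then build an explicit homeomorphism onto $\mathbb{C}^{*} \times [0,1)$. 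The factor $\mathbb{C}^{*}$ should arise because part~(ii) removes exactly the single point of $V_{2}$ over which the two graphs coincide (the image of $Y_{0}$), puncturing the parameter plane $\mathbb{C}$, while the vertical fiber $[0,1)$ comes from a normalized height between the two graphs. The main obstacle I anticipate is part~(ii): proving that the two boundaries meet at \emph{exactly} one point, rather than along some curve, requires more than the soft monotonicity used in Theorem~\ref{thm:holomorphic:nonsmooth}; it demands a genuine rigidity statement showing that equality in the relevant length conditions, together with membership in $\mathfrak{T}_{a}[Y_{0}]$, forces $X=Y_{0}$, and controlling this uniqueness is what makes the clean homeomorphism in~(iii) possible.
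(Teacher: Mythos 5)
Your outline of (i) and (iii) is essentially the paper's: the conjunction over all $w \in W^{*}$ is itself a single handle condition, so Proposition~\ref{prop:handle_condition:Lipschitz} applies directly once closedness is known (closedness does require proving that each $l(\,\cdot\,,w)$ is continuous up to $\partial\mathfrak{T}$, which the paper does via annular covering surfaces and Wolpert's quasiconformal distortion lemma); in particular you need not worry about whether a supremum of Lipschitz functions is Lipschitz. Part (iii) is, as you say, read off from the two graph descriptions of Proposition~\ref{prop:handle_condition:graph} once (ii) is in hand.

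The genuine gap is part (ii), which you explicitly leave as something you ``expect to show.'' Two concrete problems. First, when $Y_{0}$ is not itself a marked once-holed torus it is not a point of $\mathfrak{T}$ at all, so ``the unique contact point is $X=Y_{0}$'' does not typecheck; you must first replace $Y_{0}$ by its handle covering surface $\tilde{Y}_{0}$ using Lemma~\ref{lem:handle_covering}~(iv) and Lemma~\ref{lem:handle_covering:length_spectra}, and the contact point is $\tilde{Y}_{0}$. Second, and more seriously, your proposed mechanism --- that contact can only occur ``where all the binding constraints simultaneously collapse'' --- is not an argument: the two constraints $l(X)\geqq l(Y_{0})$ and $l'(X)\geqq l'(Y_{0})$ become equalities along a whole curve in Fenchel--Nielsen coordinates (the twist parameter is unconstrained), so soft monotonicity cannot isolate a single point. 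What the paper actually proves is the complementary statement: every $X \in \mathfrak{T}_{a}[Y_{0}]\setminus\{\tilde{Y}_{0}\}$ is an \emph{interior} point of $\mathfrak{T}_{\sigma}[Y_{0}]$. The mechanism is a uniform Schwarz--lemma contraction: a holomorphic $f:X\to Y_{0}$ satisfies $(f^{*}\rho_{Y_{0}})/\rho_{X}<c<1$ on the compact convex core of $X$ (which contains every closed geodesic), whence $l(Y_{0},w)\leqq c\,l(X,w)$ for \emph{all} $w$ simultaneously; Wolpert's lemma then shows that every $X'$ within quasiconformal distance $1/c$ of $X$ still lies in $\mathfrak{T}_{\sigma}[Y_{0}]$. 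This uniformity in $w$ is exactly what your sketch is missing, and it also forces a separate preliminary step: marked once-punctured tori have noncompact convex core, so one must first show that $\mathfrak{T}_{a}[Y_{0}]\cap\partial\mathfrak{T}$ is at most the single point $\tilde{Y}_{0}$ (a holomorphic map of a once-punctured torus into a once-holed torus extends to the compactifying tori and is then conformal). Without these ingredients the uniqueness in (ii), and hence the puncture producing the $\mathbb{C}^{*}$ factor in (iii), is not established.
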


\begin{rem}
If $Y_{0}$ is a marked torus, then 
$\mathfrak{T}_{\sigma}[Y_{0}]=\mathfrak{T}_{a}[Y_{0}]=\mathfrak{T}$ (see 
\cite[Example~12]{MasumotoPreprint}). 
\end{rem}

For the proof of Theorem~\ref{thm:length_spectra} we introduce some notations, 
and prepare several lemmas. We first remark the following lemma. 

\begin{lem}
\label{lem:handle_covering:length_spectra}
If $\tilde{Y}_{0}$ is the handle covering surface of $Y_{0}$, then 
$\mathfrak{T}_{\sigma}[\tilde{Y}_{0}]=\mathfrak{T}_{\sigma}[Y_{0}]$. 
\end{lem}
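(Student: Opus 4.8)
The plan is to reduce the equality of the two sets to a pointwise equality of length functions. Since $\mathfrak{T}_{\sigma}[Y]$ is defined entirely by the family of inequalities $l(X,w) \geqq l(Y,w)$, $w \in W^{*}$, it suffices to prove that $l(\tilde{Y}_{0},w)=l(Y_{0},w)$ for every $w \in W^{*}$. When $Y_{0}$ is a marked torus or a marked once-holed torus we have $\tilde{Y}_{0}=Y_{0}$ by Lemma~\ref{lem:handle_covering}~(i), and there is nothing to prove; so I may assume $Y_{0}$ is not a torus and uniformize $R_{0}=\mathbb{D}/\Gamma_{0}$ by a Fuchsian group $\Gamma_{0}\cong\pi_{1}(R_{0})$ acting on the unit disk $\mathbb{D}$ with its Poincar\'{e} metric.

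By the defining properties of the handle covering, $\pi_{0}\colon\tilde{R}_{0}\to R_{0}$ is the holomorphic covering associated with the subgroup $H\subset\Gamma_{0}$ generated by the elements of $\Gamma_{0}$ representing $a_{0}$ and $b_{0}$; thus $\tilde{R}_{0}=\mathbb{D}/H$, and $\pi_{0}$ is the natural projection, which is a local isometry for the two hyperbolic metrics. Under the induced identification $\pi_{1}(\tilde{R}_{0})\cong H$ the marked loops $\tilde{a}_{0},\tilde{b}_{0}$ correspond to the generators of $H$ representing $a_{0},b_{0}$, so for each $w\in W^{*}$ the loop $w(\tilde{Y}_{0})$ corresponds to the single element $\gamma_{w}:=w(a_{0},b_{0})\in H\subseteq\Gamma_{0}$.

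The heart of the matter is that $\gamma_{w}$ is literally the \emph{same} element of $\mathrm{PSL}(2,\mathbb{R})$ whether viewed in $H$ or in $\Gamma_{0}$. Consequently its isometry type and, when it is hyperbolic, its translation length on $\mathbb{D}$ are intrinsic to $\gamma_{w}$ and independent of the ambient group. Since $l(Y,w)$ is the infimum of hyperbolic lengths over the free homotopy class $\Gamma(Y,w)$ of $w(Y)$, it equals the translation length of $\gamma_{w}$ when $\gamma_{w}$ is hyperbolic and equals $0$ when $\gamma_{w}$ is parabolic. In either case $l(\tilde{Y}_{0},w)=l(Y_{0},w)$, and the lemma follows.

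The step I expect to require the most care is the bookkeeping in the middle paragraph: verifying that the handle-preserving covering $\pi_{0}$ really does identify $\pi_{1}(\tilde{R}_{0})$ with the subgroup $H$ in a way that carries $w(\tilde{Y}_{0})$ to $\gamma_{w}$, and that passage from a single conjugacy class in $H$ to the (possibly larger) conjugacy class in $\Gamma_{0}$ does not alter the infimal length. The latter holds because conjugate elements share a translation length and the geodesic representative realizes that same length in both quotients.
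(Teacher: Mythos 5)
Your proof is correct, and it supplies the argument that the paper itself omits (the lemma is merely ``remarked'' there without proof): the handle covering $\pi_{0}\colon\tilde{R}_{0}\to R_{0}$ is a local isometry corresponding to the subgroup $H\leqq\Gamma_{0}$ generated by the deck transformations representing $a_{0}$ and $b_{0}$, so for each $w\in W^{*}$ the classes $\Gamma(\tilde{Y}_{0},w)$ and $\Gamma(Y_{0},w)$ are determined by the same element $\gamma_{w}$ of $\mathrm{PSL}(2,\mathbb{R})$, whose translation length is a conjugacy invariant; hence $l(\tilde{Y}_{0},w)=l(Y_{0},w)$ and the two sets $\mathfrak{T}_{\sigma}$ coincide. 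You correctly isolate and resolve the one genuine subtlety, namely that enlarging the conjugacy class from $H$ to $\Gamma_{0}$ does not change the infimal length.
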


Let $D$ be a doubly connected Riemann surface. Denote by $\lambda(D)$ the 
extremal length of the free homotopy class $\Gamma(D)$ of a simple loop 
separating the boundary components of $D$. Unless $D$ is conformally equivalent 
to the punctured plane $\mathbb{C} \setminus \{0\}$, it carries a hyperbolic 
metric. Let $l(D)$ stand for the infimum of the hyperbolic lengths of loops in 
$\Gamma(D)$. Define $l(D)=0$ if $D$ is conformally equivalent to 
$\mathbb{C} \setminus \{0\}=0$. Note that the identity 
\begin{equation}
\label{eq:hyperbolic-extremal:annulus}
\lambda(D)=\frac{1}{\,\pi\,}l(D)
\end{equation}
holds. 

\begin{rem}
If $a$ is a simple loop on $D$ separating the boundary components of $D$, then 
so is $a^{-1}$. Though the free homotopy classes $\Gamma^{+}$ and $\Gamma^{-}$ 
of $a$ and $a^{-1}$, respectively, are disjoint, their extremal lengths are 
identical with each other. The common value is denoted by $\lambda(D)$. In the 
sequel $\Gamma(D)$ will represent one of $\Gamma^{+}$ and $\Gamma^{-}$. 
\end{rem}

Let $Y=(R,\chi)$ be a Riemann surface with marked handle, and take 
$w \in W^{*}$. Let $D$ be the annular covering surface of $R$ with respect to 
the loop $w(Y)$ (see \cite[\S3]{MRR}). Thus $D$ is a doubly connected Riemann 
surface, and there is a holomorphic covering map $\pi$ of $D$ onto $R$ which 
maps $\Gamma(D)$ into $\Gamma(Y,w)$. Clearly, we have 
\begin{equation}
\label{eq:annular_covering:hyperbolic}
l(D)=l(Y,w).
\end{equation}

\begin{lem}[\mbox{\rm Wolpert \cite[Lemma~3.1]{Wolpert1979}}]
\label{lem:length_spectra:qc}
Let $Y_{1}$ and $Y_{2}$ be Riemann surfaces with marked handle. If there is 
a $K$-quasiconformal mapping of $Y_{1}$ onto $Y_{2}$, then 
$$
\frac{1}{\,K\,}l(Y_{1},w) \leqq l(Y_{2},w) \leqq Kl(Y_{1},w)
$$
for all $w \in W^{*}$. 
\end{lem}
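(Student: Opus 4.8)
The plan is to route everything through the annular covering surfaces, using the two identities \eqref{eq:hyperbolic-extremal:annulus} and \eqref{eq:annular_covering:hyperbolic} established just above, together with the quasi-invariance of extremal length under quasiconformal maps. Write $Y_{j}=(R_{j},\chi_{j})$ with $\chi_{j}=\{a_{j},b_{j}\}$. If one of the $R_{j}$ is a torus, then so is the other, since a quasiconformal map is in particular a homeomorphism and hence preserves the topological type; in that case both $l(Y_{1},w)$ and $l(Y_{2},w)$ vanish by convention and the asserted inequalities hold trivially. I may therefore assume that both $R_{1}$ and $R_{2}$ carry hyperbolic metrics. First I would record that, because $f$ sends the loops of $\chi_{1}$ to loops freely homotopic to those of $\chi_{2}$, it carries the free homotopy class $\Gamma(Y_{1},w)$ onto $\Gamma(Y_{2},w)$ for every $w \in W^{*}$; equivalently, $f_{*}$ sends the conjugacy class of $w$ in $\pi_{1}(R_{1})$ to the conjugacy class of $w$ in $\pi_{1}(R_{2})$.

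Next I would lift $f$ to the annular covers. Let $D_{j}$ be the annular covering surface of $R_{j}$ with respect to $w(Y_{j})$, with holomorphic projection $\pi_{j}:D_{j} \to R_{j}$ carrying $\Gamma(D_{j})$ into $\Gamma(Y_{j},w)$. Since $D_{j}$ is precisely the cover corresponding to the cyclic subgroup generated by $w$, and since $f_{*}$ respects this subgroup up to conjugacy, $f$ lifts to a homeomorphism $\tilde{f}:D_{1} \to D_{2}$ with $\pi_{2} \circ \tilde{f}=f \circ \pi_{1}$, and this lift maps $\Gamma(D_{1})$ onto $\Gamma(D_{2})$. Because the projections $\pi_{j}$ are holomorphic local homeomorphisms, the maximal dilatation is a purely local quantity that is unchanged upon passage to the covers; hence $\tilde{f}$ is again $K$-quasiconformal.

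I would then invoke the quasi-invariance of extremal length: a $K$-quasiconformal map distorts the extremal length of any curve family by a factor of at most $K$, so $\frac{1}{K}\lambda(D_{1}) \leqq \lambda(D_{2}) \leqq K\lambda(D_{1})$. Substituting $\lambda(D_{j})=\frac{1}{\pi}l(D_{j})$ via \eqref{eq:hyperbolic-extremal:annulus} and then $l(D_{j})=l(Y_{j},w)$ via \eqref{eq:annular_covering:hyperbolic}, and clearing the common factor $\frac{1}{\pi}$, yields $\frac{1}{K}l(Y_{1},w) \leqq l(Y_{2},w) \leqq Kl(Y_{1},w)$, which is the assertion.

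The main obstacle is the lifting step. One must verify carefully that $f$ genuinely descends to a homeomorphism between the two annular covers with the core homotopy classes matched up correctly—this is where orientation and the distinction between $\Gamma^{+}$ and $\Gamma^{-}$ noted in the Remark must be tracked—and that the lift retains the \emph{same} dilatation constant $K$ rather than a worse one. A secondary point to dispose of is the degenerate case in which $w(Y_{j})$ is peripheral, so that $D_{j}$ is conformally $\mathbb{C} \setminus \{0\}$ and $l(D_{j})=0$; since a quasiconformal map takes punctures to punctures, this degeneracy occurs simultaneously for $j=1$ and $j=2$, and both sides of the desired chain vanish.
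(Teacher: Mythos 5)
Your proof is correct and follows essentially the same route as the paper's: lift the $K$-quasiconformal mapping to the annular covering surfaces, note that the lift is again $K$-quasiconformal and matches the core curve families, and then combine the quasi-invariance of extremal length with the identities \eqref{eq:hyperbolic-extremal:annulus} and \eqref{eq:annular_covering:hyperbolic}. The additional care you take with the torus case and the degenerate (peripheral) case is sensible but does not change the argument.
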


In fact, let $D_{j}$ be the annular covering surface of $Y_{j}$ with respect 
to $w(Y_{j})$. If there is a $K$-quasiconformal mapping of $Y_{1}$ onto 
$Y_{2}$, then it is lifted to a $K$-quasiconformal mapping $D_{1}$ onto $D_{2}$ 
which maps $\Gamma(D_{1})$ to $\Gamma(D_{2})$. Since extremal lengths are 
quasi-invariant under quasiconformal mappings, the lemma is an immediate 
consequence of~\eqref{eq:hyperbolic-extremal:annulus} 
and~\eqref{eq:annular_covering:hyperbolic}. 

\begin{lem}
\label{lem:length_spectra:continuity}
For each $w \in W^{*}$ the function $l(\,\cdot\,,w)$ is continuous on 
$\mathfrak{T}$. 
\end{lem}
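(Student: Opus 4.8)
The plan is to prove continuity of $l(\,\cdot\,,w)$ by combining the quasiconformal estimate of Lemma~\ref{lem:length_spectra:qc} with the fact that the Fenchel--Nielsen (or extremal-length) parametrization of $\mathfrak{T}$ is a homeomorphism, so that nearby marked once-holed tori differ by quasiconformal mappings of small dilatation. Concretely, fix $w \in W^{*}$ and a point $X_{0} \in \mathfrak{T}$. Given $X$ close to $X_{0}$ in $\mathfrak{T}$, I would produce a $K(X)$-quasiconformal mapping $h:X_{0} \to X$ of marked once-holed tori whose maximal dilatation $K(X)$ tends to $1$ as $X \to X_{0}$. Lemma~\ref{lem:length_spectra:qc} then gives
\begin{equation}
\label{eq:continuity:sandwich}
\frac{1}{\,K(X)\,}\,l(X_{0},w) \leqq l(X,w) \leqq K(X)\,l(X_{0},w),
\end{equation}
and letting $K(X) \to 1$ forces $l(X,w) \to l(X_{0},w)$, which is exactly the asserted continuity at $X_{0}$.

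The first main step is to justify the existence of such quasiconformal mappings with controlled dilatation. Since $\mathfrak{T}$ is identified, via $\Lambda$ (or via the Fenchel--Nielsen coordinate $\Phi$ used in the proof of Theorem~\ref{thm:holomorphic:nonsmooth}), with a region in Euclidean space and since the Teichm\"uller and reduced Teichm\"uller space structures make $\mathfrak{T}$ a space whose topology is the quasiconformal topology, two marked once-holed tori that are close in $\mathfrak{T}$ admit a quasiconformal mapping between them whose dilatation is close to $1$. This is a standard feature of Teichm\"uller theory: the Teichm\"uller distance, which is $\tfrac{1}{2}\log K$ for the extremal $K$, induces the same topology as $\Lambda$, so $X \to X_{0}$ in $\mathfrak{T}$ implies the extremal dilatation $K(X) \to 1$. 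I would invoke this directly, noting that the coordinate homeomorphisms recalled in \S2 already encode it.

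The second step handles the interface between $\mathfrak{T}^{\circ}$ and $\partial\mathfrak{T}$, that is, between genuinely once-holed and once-punctured tori. For $X \in \partial\mathfrak{T}$ (a marked once-punctured torus) the target $R$ is still not a torus, so $l(X,w)$ is defined as the infimum of hyperbolic lengths in $\Gamma(X,w)$ and the estimate~\eqref{eq:continuity:sandwich} applies verbatim once a quasiconformal mapping of controlled dilatation is available; the point is only to confirm that such mappings exist across the boundary stratum, which again follows from the fact that $(\tau,s)\mapsto X_{\tau}^{(s)}$ is a homeomorphism of $\mathbb{H}\times[0,1)$ onto $\mathfrak{T}$ and is real-analytic on the interior. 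Thus continuity holds at interior points and at boundary points alike.

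The step I expect to be the main obstacle is the boundary behavior as $X$ approaches a once-punctured torus, because there $l'(X)\to 0$ and the hyperbolic geometry degenerates: a short geodesic develops, cusps can form, and one must be sure that the quasiconformal control near the puncture does not break down even though hyperbolic lengths of certain curves blow up while the relevant geodesic collapses. The resolution is that Lemma~\ref{lem:length_spectra:qc} is a purely multiplicative, dilatation-controlled estimate that remains valid uniformly for each \emph{fixed} $w$, so no uniformity in $w$ is needed and the degeneration of the metric is absorbed by the factor $K(X)\to 1$; I would make this explicit by checking that for the fixed word $w$ the quantity $l(X_{0},w)$ is a finite constant, whence both bounds in~\eqref{eq:continuity:sandwich} converge to it regardless of how the ambient metric degenerates elsewhere on the surface.
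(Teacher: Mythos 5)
Your argument is fine on the interior $\mathfrak{T}^{\circ}$, where it coincides with the paper's: continuity there is an immediate consequence of Lemma~\ref{lem:length_spectra:qc}. The genuine gap is at the boundary stratum $\partial\mathfrak{T}$. You assert that any two marked once-holed tori that are close in $\mathfrak{T}$ admit a quasiconformal mapping with dilatation close to $1$, and that the existence of such mappings ``across the boundary stratum'' follows from the fact that $(\tau,s)\mapsto X_{\tau}^{(s)}$ is a homeomorphism. This is false: a once-punctured torus $X_{\tau_{0}}^{(0)}$ and a slit torus $X_{\tau_{0}}^{(s)}$ with $s>0$ are never quasiconformally equivalent, because quasiconformal homeomorphisms preserve punctures (an isolated boundary point must go to an isolated boundary point), whereas $X_{\tau_{0}}^{(s)}$ has a nondegenerate boundary continuum. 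Recall from \S2 that $\mathfrak{T}$ is the disjoint union of the Teichm\"{u}ller space $\mathfrak{T}'$ of a once-punctured torus and the reduced Teichm\"{u}ller space $\mathfrak{T}''$ of a non-punctured once-holed torus; these are distinct quasiconformal equivalence classes, glued together only topologically via $\Lambda$. Consequently your sandwich inequality cannot even be written down when $X_{0}\in\partial\mathfrak{T}$ and $X\in\mathfrak{T}^{\circ}$ (the extremal dilatation is $+\infty$, not close to $1$), and the ``main obstacle'' you single out --- the degeneration $l'(X)\to 0$ --- is not the actual difficulty.

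The paper closes exactly this gap with a two-step argument at a boundary point $X_{\tau_{0}}^{(0)}$. First, for fixed $\tau_{0}$ it shows $l\bigl(X_{\tau_{0}}^{(s)},w\bigr)\to l\bigl(X_{\tau_{0}}^{(0)},w\bigr)$ as $s\to 0$, not by quasiconformal mappings but by the conformal inclusions $\iota_{s}:X_{\tau_{0}}^{(s)}\hookrightarrow X_{\tau_{0}}^{(0)}$, the induced conformal embeddings of the annular covering surfaces $A(s)\hookrightarrow A(0)$, the monotonicity of $s\mapsto l\bigl(X_{\tau_{0}}^{(s)},w\bigr)$, and a Carath\'{e}odory-kernel type convergence of the moduli of these annuli. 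Second, the variation in $\tau$ at fixed $s$ is controlled by the explicit affine quasiconformal mappings $F_{\tau}$ with dilatation $e^{d_{\mathbb{H}}(\tau_{0},\tau)}$, to which Lemma~\ref{lem:length_spectra:qc} does apply because both surfaces carry the same slit parameter $s$ and hence lie in the same quasiconformal class; a triangle inequality then combines the two steps. You would need to supply the first step; without it the proof does not go through at $\partial\mathfrak{T}$.
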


\begin{proof}
It follows from Lemma~\ref{lem:length_spectra:qc} that $l(\,\cdot\,,w)$ is 
continuous on $\mathfrak{T}^{\circ}$. To show that it is also continuous at 
each point of $\partial\mathfrak{T}$, take an arbitrary marked once-punctured 
torus $X_{\tau_{0}}^{(0)}$. For $s \in [0,1)$ the annular covering surface of 
$X_{\tau_{0}}^{(s)}$ with respect to the loop $w\bigl(X_{\tau_{0}}^{(s)}\bigr)$ 
is conformally equivalent to the annulus $A(s):=\bigl\{z \in \mathbb{C} \mid
\exp\bigl(-2\pi^{2}/l(X_{\tau_{0}}^{(s)},w)\bigr)<|z|<1\bigr\}$. 
The inclusion mapping $\iota_{s}$ of $X_{\tau_{0}}^{(s)}$ into 
$X_{\tau_{0}}^{(0)}$ induces a conformal mapping of $A(s)$ into $A(0)$. Observe 
that the function $s \mapsto l(X_{\tau_{0}}^{(s)},w)$ is increasing. Since 
$\iota_{s}$ tends to the identity mapping of $X_{\tau_{0}}^{(0)}$ onto itself 
as $s \to 0$, we see that $l(X_{\tau_{0}}^{(s)},w)$ converges to 
$l(X_{\tau_{0}}^{(0)},w)$. 

Now, for $(\tau,s) \in \mathbb{H} \times [0,1)$ the $\mathbb{R}$-linear mapping 
$F_{\tau}$ of $\mathbb{C}$ onto itself with $F_{\tau}(1)=1$ and 
$F_{\tau}(\tau_{0})=\tau$ induces a quasiconformal mapping of 
$X_{\tau_{0}}^{(s)}$ onto $X_{\tau}^{(s)}$ whose maximal dilatation is equal to 
$e^{d_{\mathbb{H}}(\tau_{0},\tau)}$, where $d_{\mathbb{H}}(\tau_{0},\tau)$ is 
the distance between $\tau_{0}$ and $\tau$ with respect to the hyperbolic 
metric on $\mathbb{H}$. We apply Lemma~\ref{lem:length_spectra:qc} to obtain 
\begin{align*}
 & \bigl|l\bigl(X_{\tau}^{(s)},w\bigr)-l\bigl(X_{\tau_{0}}^{(0)},w\bigr)\bigr|
\\ \leqq &
\bigl|l\bigl(X_{\tau}^{(s)},w\bigr)-l\bigl(X_{\tau_{0}}^{(s)},w\bigr)\bigr|+
\bigl|l\bigl(X_{\tau_{0}}^{(s)},w\bigr)-l\bigl(X_{\tau_{0}}^{(0)},w\bigr)\bigr|
\\
\leqq & 
\bigl(e^{d_{\mathbb{H}}(\tau_{0},\tau)}-e^{-d_{\mathbb{H}}(\tau_{0},\tau)}\bigr)
l\bigl(X_{\tau_{0}}^{(s)},w\bigr)+
\bigl|l\bigl(X_{\tau_{0}}^{(s)},w\bigr)-l\bigl(X_{\tau_{0}}^{(0)},w\bigr)\bigr|.
\end{align*}
Consequently, 
$l\bigl(X_{\tau}^{(s)},w\bigr) \to l\bigl(X_{\tau_{0}}^{(0)},w\bigr)$ as 
$(\tau,s) \to (\tau_{0},0)$, which means that the function $l(\,\cdot\,,w)$ is 
continuous at $X_{\tau_{0}}^{(0)}$, as desired. 
\end{proof}

\begin{cor}
\label{cor:length_spectra:continuity}
$\mathfrak{T}_{\sigma}[Y_{0}]$ is a closed subset of\/ $\mathfrak{T}$. 
\end{cor}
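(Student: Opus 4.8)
The plan is to express $\mathfrak{T}_{\sigma}[Y_{0}]$ as an intersection of closed sets and then invoke the continuity just established in Lemma~\ref{lem:length_spectra:continuity}. By the definition of $\mathfrak{T}_{\sigma}[Y_{0}]$, a point $X$ belongs to it precisely when $l(X,w) \geqq l(Y_{0},w)$ holds for every $w \in W^{*}$; equivalently,
$$
\mathfrak{T}_{\sigma}[Y_{0}]=\bigcap_{w \in W^{*}}
\bigl\{X \in \mathfrak{T} \mid l(X,w) \geqq l(Y_{0},w)\bigr\}.
$$
Here each $l(Y_{0},w)$ is a fixed nonnegative number, since $Y_{0}$ is fixed throughout.

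For each individual $w \in W^{*}$, the set $\bigl\{X \in \mathfrak{T} \mid l(X,w) \geqq l(Y_{0},w)\bigr\}$ is exactly the preimage of the closed half-line $[l(Y_{0},w),+\infty)$ under the function $l(\,\cdot\,,w)$. Lemma~\ref{lem:length_spectra:continuity} asserts that $l(\,\cdot\,,w)$ is continuous on all of $\mathfrak{T}$, so this preimage is a closed subset of $\mathfrak{T}$.

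Finally, an arbitrary intersection of closed sets is closed, whence $\mathfrak{T}_{\sigma}[Y_{0}]$ is closed in $\mathfrak{T}$. I do not anticipate any genuine obstacle in this argument: the entire substance of the corollary is carried by the continuity of the length functions $l(\,\cdot\,,w)$, which the preceding lemma already supplies, and the remaining steps are purely topological. The only point meriting a word of care is that the defining condition ranges over the infinite family $W^{*}$, but this causes no difficulty because closedness is preserved under arbitrary (not merely finite) intersections.
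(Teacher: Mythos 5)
Your proof is correct and is precisely the argument the paper intends: the corollary follows from Lemma~\ref{lem:length_spectra:continuity} by writing $\mathfrak{T}_{\sigma}[Y_{0}]$ as the intersection over $w \in W^{*}$ of the closed preimages $l(\,\cdot\,,w)^{-1}\bigl([l(Y_{0},w),+\infty)\bigr)$. The paper leaves this routine deduction implicit, and your write-up supplies exactly the missing details.
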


\begin{proof}[Proof of Theorem~\ref{thm:length_spectra}]
Since conformal mappings decrease hyperbolic metrics, the statement 
$$
\mathcal{P}_{\sigma}(X):=``\text{$l(X,w) \geqq l(Y_{0},w)$ for all 
$w \in W^{*}$}"
$$
is a handle condition  (see \cite[Example~8]{MasumotoPreprint}). As 
$\mathfrak{T}[P_{\sigma}]=\mathfrak{T}_{\sigma}[Y_{0}]$, 
Proposition~\ref{prop:handle_condition:Lipschitz} together with 
Corollary~\ref{cor:length_spectra:continuity} implies assertion~(i). 

In order to show assertion~(ii), by Lemmas~\ref{lem:handle_covering}~(iv) 
and~\ref{lem:handle_covering:length_spectra}, 
we have only to consider the case where $Y_{0}$ is a marked once-holed torus, 
say, $X_{\tau_{0}}^{(s_{0})}$. If there is a holomorphic mapping of a marked 
once-punctured torus $X_{\tau}^{(0)}$ into $Y_{0}=X_{\tau_{0}}^{(s_{0})}$, then 
it is extended to a holomorphic mapping between the marked tori $X_{\tau}$ and 
$X_{\tau_{0}}$, which must be conformal. Consequently, $Y_{0}$ is also a marked 
once-punctured torus identical with $X_{\tau}^{(0)}$. We have shown that 
$\mathfrak{T}_{a}[Y_{0}] \cap \partial\mathfrak{T}$ is empty or a singleton and 
that in the latter case $\mathfrak{T}_{a}[Y_{0}] \cap \partial\mathfrak{T}$ 
consists only of $Y_{0}$. 

Take an arbitrary $X \in \mathfrak{T}_{a}[Y_{0}] \setminus \{Y_{0}\}$. We 
employ arguments in \cite{Bourque2016} to prove that $X$ lies in the interior 
of $\mathfrak{T}_{\sigma}[Y_{0}]$. Let $f$ be a holomorphic mapping of $X$ into 
$Y_{0}$. Let $\rho_{X}=\rho_{X}(z)\,|dz|$ and 
$\rho_{Y_{0}}=\rho_{Y_{0}}(\zeta)\,|d\zeta|$ denote the hyperbolic metrics on 
$X$ and $Y_{0}$, respectively. By Schwarz's lemma the continuous function 
$(f^{*}\rho_{Y_{0}})/\rho_{X}$ is strictly less than one pointwise, where 
$f^{*}\rho_{Y_{0}}$ stands for the pull-back of $\rho_{Y_{0}}$ by $f$. The 
convex core $C$ of $X$ is compact and hence there is $c \in (0,1)$ for which 
$(f^{*}\rho_{Y_{0}})/\rho_{X}<c$ on $C$. For $w \in W^{*}$ let $\gamma_{w}$ be 
the closed geodesic on $X$ freely homotopic to $w(X)$. Since $\gamma_{w}$ lies 
in $C$, we have 
$$
l(Y,w) \leqq \int_{f_{*}\gamma_{w}} \rho_{Y_{0}}=
\int_{\gamma_{w}} f^{*}\rho_{Y_{0}} \leqq c\int_{\gamma_{w}} \rho_{X}=cl(X,w).
$$
There is a neighborhood $U$ of $X$ such that for any $X' \in U$ there is a 
$(1/c)$-quasiconformal mapping of $X$ onto $X'$. Applying 
Lemma~\ref{lem:length_spectra:qc}, we infer that 
$X' \in \mathfrak{T}_{\sigma}[Y_{0}]$. Thus 
$U \subset \mathfrak{T}_{\sigma}[Y_{0}]$, or, $X$ is an interior point of 
$\mathfrak{T}_{\sigma}[Y_{0}]$, as claimed. We have proved assertion~(ii). 

Assertion~(iii) is now an easy consequence of 
Proposition~\ref{prop:handle_condition:graph}. This completes the proof. 
\end{proof}

\begin{rem}
We see from the proof that the element in 
$\mathfrak{T}_{a}[Y_{0}] \cap \partial\mathfrak{T}_{\sigma}[Y_{0}]$ is the 
handle covering surface $\tilde{Y}_{0}$ of $Y_{0}$. There is exactly one 
holomorphic mapping of $\tilde{Y}_{0}$ into $Y_{0}$ (see \cite{Masumoto2013}). 
Therefore, if $Y_{0}$ is not a marked once-holed torus, then there are no 
holomorphic mappings $f:\tilde{Y}_{0} \to Y_{0}$ with $d(f)<\infty$. 
\end{rem}

\section{Critical extremal lengths}
\label{sec:critical_extremal}

The purpose of this section is to evaluate the critical extremal lengths for 
the existence of holomorphic and conformal mappings of marked once-holed tori 
into a Riemann surface with marked handle. Let $Y=(R,\chi)$, where 
$\chi=\{a,b\}$, be a Riemann surface with marked handle. Recall that $W$ is the 
free group generated by $u$ and $v$. Set $\Gamma(Y)=\Gamma(Y,u)$ and 
$l(Y)=l(Y,u)$. Thus $\Gamma(Y)$ is the free homotopy class of $a=u(Y)$. Let 
$\lambda(Y)$ stand for its extremal length. Note that 
$\lambda\bigl(X_{\tau}^{(s)}\bigr)=1/\im\tau$. 

Now, fix a Riemann surface $Y_{0}$ with marked handle. We begin with evaluating 
the critical extremal length $\lambda_{a}[Y_{0}]$ for the existence of 
holomorphic mappings. 

\begin{thm}
\label{thm:critical_extremal:holomorphic}
$\lambda_{a}[Y_{0}]=\dfrac{1}{\,\pi\,}l(Y_{0})$. 
\end{thm}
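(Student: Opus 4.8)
=== PROOF PROPOSAL ===

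The plan is to prove the two inequalities $\lambda_a[Y_0] \leqq \frac{1}{\pi}l(Y_0)$ and $\lambda_a[Y_0] \geqq \frac{1}{\pi}l(Y_0)$ separately, each via a clean criterion supplied by the preliminaries. Throughout I would exploit the fact that $\lambda\bigl(X_\tau^{(s)}\bigr)=1/\im\tau$, so that the defining strip $\mathbb{H}[\mathcal{P}_a]=\mathcal{H}(1/\lambda_a[Y_0])$ of Proposition~\ref{prop:holomorphic:critical_extremal} translates directly into a statement about which imaginary parts $\im\tau$ admit a holomorphic mapping $X_\tau^{(s)}\to Y_0$ for some slit length $s$. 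Concretely, $\lambda_a[Y_0]$ is the infimal extremal length $\lambda\bigl(X_\tau^{(s)}\bigr)$ over marked once-holed tori that map holomorphically into $Y_0$; equivalently $1/\lambda_a[Y_0]$ is the supremum of admissible values of $\im\tau$.

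For the inequality $\lambda_a[Y_0] \geqq \frac{1}{\pi}l(Y_0)$ I would argue by the monotonicity of hyperbolic length under holomorphic maps. Suppose $X=X_\tau^{(s)}$ admits a holomorphic mapping $f$ into $Y_0$. Then $f$ does not increase hyperbolic lengths, so the length $l(X)=l(X,u)$ of the geodesic in the class $\Gamma(X,u)$ dominates $l(Y_0,u)=l(Y_0)$. Combining this with the extremal-length/hyperbolic-length identity $\lambda(D)=\frac{1}{\pi}l(D)$ of~\eqref{eq:hyperbolic-extremal:annulus} applied to the annular covering surface $D$ of $X$ associated with $u(X)$, one gets $\lambda(X)\geqq \frac{1}{\pi}l(X)\geqq\frac{1}{\pi}l(Y_0)$. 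Taking the infimum over all such $X$ yields $\lambda_a[Y_0]\geqq\frac{1}{\pi}l(Y_0)$. Here I must be careful that the relevant extremal length $\lambda(X)=\lambda\bigl(X_\tau^{(s)}\bigr)$, computed on the once-holed torus, is bounded below by the annular extremal length, so I would invoke monotonicity of extremal length under the covering map $D\to X$ in the class $\Gamma(X,u)$.

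The reverse inequality $\lambda_a[Y_0] \leqq \frac{1}{\pi}l(Y_0)$ is where I expect the real work to lie: I must exhibit, for every $\im\tau$ slightly below $\frac{\pi}{l(Y_0)}$, a slit length $s$ and an actual holomorphic mapping $X_\tau^{(s)}\to Y_0$. The natural construction is to pass to the handle covering surface $\tilde Y_0$ (Lemma~\ref{lem:handle_covering}), which is itself a marked once-holed torus with $\mathfrak{T}_a[\tilde Y_0]=\mathfrak{T}_a[Y_0]$ and $l(\tilde Y_0)=l(Y_0)$, reducing matters to the case $Y_0\in\mathfrak{T}^\circ$. Then I would use the fact that $\tilde Y_0=X_{\tau_0}^{(s_0)}$ itself lies in $\mathfrak{T}_a[Y_0]$, so $\im\tau_0\leqq 1/\lambda_a[Y_0]$; identifying $\im\tau_0$ with $\frac{1}{\lambda(\tilde Y_0)}=\frac{\pi}{l(\tilde Y_0)}=\frac{\pi}{l(Y_0)}$ via the annular covering would force $1/\lambda_a[Y_0]\geqq \frac{\pi}{l(Y_0)}$, i.e. the desired upper bound on $\lambda_a[Y_0]$.

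The main obstacle will be justifying this last identification sharply, namely that the handle covering surface $\tilde Y_0$ attains the extremal configuration: that the marked torus $X_{\tau_0}$ underlying $\tilde Y_0$ realizes the minimal admissible extremal length exactly, with no loss from the slit. I would want to show that as the slit length $s_0\to 0$ one approaches the once-punctured boundary along which $\lambda=1/\im\tau_0$ is governed purely by $l(Y_0)$, and that holomorphic mappings into $Y_0$ persist precisely up to this threshold. This amounts to verifying that the annular covering of $\tilde Y_0$ in the class $u$ is conformally the maximal admissible annulus, so that~\eqref{eq:hyperbolic-extremal:annulus} gives equality rather than merely an inequality; controlling this boundary behavior, together with the subtlety that holomorphic (as opposed to conformal) mappings may have higher degree, is the crux of the argument.
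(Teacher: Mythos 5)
Your lower bound follows the paper's route, but the justification you give for the key comparison $\lambda(X)\geqq\frac{1}{\pi}l(X)$ is not valid as stated. Extremal length is not monotone under the (non-injective, infinite-degree) covering map $D\to X$: the standard comparison principles for holomorphic maps require injectivity, and a naive application to the projection of the annular cover would, if anything, point the wrong way, since the pull-back of a metric under an infinite covering has infinite area. The inequality $\pi\lambda(X)\geqq l(X)$ is true, but it is Maskit's theorem, proved by lifting \emph{embedded} annuli homotopic to $u(X)$ into the annular cover and comparing moduli; the paper instead obtains it directly from the explicit horizontal annulus $D_{\tau}\subset T_{\tau}^{(s)}$, for which $\lambda\bigl(X_{\tau}^{(s)}\bigr)=\lambda(D_{\tau})=\frac{1}{\pi}l(D_{\tau})$ exactly by~\eqref{eq:hyperbolic-extremal:annulus}, combined with the monotonicity of \emph{hyperbolic} length under the inclusion $D_{\tau}\hookrightarrow T_{\tau}^{(s)}$. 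This part of your argument is repairable by citing Maskit rather than ``monotonicity under the covering map.''

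The upper bound is where the proposal genuinely breaks down. You propose to deduce $\lambda_{a}[Y_{0}]\leqq\frac{1}{\pi}l(Y_{0})$ from the single element $\tilde{Y}_{0}=X_{\tau_{0}}^{(s_{0})}\in\mathfrak{T}_{a}[Y_{0}]$ by ``identifying'' $\im\tau_{0}$ with $\pi/l(Y_{0})$, i.e.\ by showing $\lambda(\tilde{Y}_{0})=\frac{1}{\pi}l(\tilde{Y}_{0})$. That identity is false for every once-holed torus: the strict inequality $\lambda(\tilde{Y}_{0})>\frac{1}{\pi}l(\tilde{Y}_{0})$ --- the very inequality your own first half relies on --- holds because $D_{\tau_{0}}$ is a proper subdomain of $T_{\tau_{0}}^{(s_{0})}$, and it persists on the once-punctured boundary, so letting the slit length tend to $0$ does not help (nor can you vary $s_{0}$, since $\tilde{Y}_{0}$ is a fixed surface determined by $Y_{0}$). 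The infimum $\lambda_{a}[Y_{0}]$ is not attained by any element of $\mathfrak{T}_{a}[Y_{0}]$; one must produce a \emph{sequence} of marked once-holed tori in $\mathfrak{T}_{a}[Y_{0}]$ whose extremal lengths decrease to $\frac{1}{\pi}l(Y_{0})$. The paper does this by exhausting the annular covering surface $D_{0}$ of $R_{0}$ with respect to $a_{0}$ by relatively compact annuli $D$ with $l(D)<l(Y_{0})+\varepsilon$, attaching a thin handle along a lifted arc representing $b_{0}$, and observing that the covering projection restricts to a holomorphic (generally non-injective) mapping of the resulting once-holed torus $\tilde{X}$ into $Y_{0}$ with $\lambda(\tilde{X})\leqq\lambda(D)=\frac{1}{\pi}l(D)$. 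This construction --- in particular the essential use of holomorphic mappings of infinite degree --- is the real content of the theorem and is absent from your proposal.
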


\begin{proof}
If $Y_{0}$ is a marked torus, then $\mathfrak{T}_{a}[Y_{0}]=\mathfrak{T}$ (see 
the remark following Theorem~\ref{thm:length_spectra}) and hence 
$\lambda_{a}[Y_{0}]=0$. Since $l(Y_{0})=0$ by definition, we see that the 
theorem is valid in this case. 

Next suppose that $Y_{0}$ is not a marked torus. 
By Lemma~\ref{lem:handle_covering}~(iv) we have only to consider the case where 
$Y_{0}$ is a marked once-holed torus. Let $X_{\tau}^{(s)}$ be an arbitrary 
element of $\mathfrak{T}_{a}[Y_{0}]$. The image $D_{\tau}$ of the horizontal 
strip $\{z \in \mathbb{C} \mid 0<\im z<\im \tau\}$ by the projection 
$\pi_{\tau}:\mathbb{C} \to T_{\tau}=\mathbb{C}/G_{\tau}$ is a doubly connected 
domain on $T_{\tau}$ and is included in $T_{\tau}^{(s)}$. Since 
$\lambda\bigl(X_{\tau}^{(s)}\bigr)=\lambda(D_{\tau})=1/\im\tau$, we see 
from~\eqref{eq:hyperbolic-extremal:annulus} that 
$$
\lambda\bigl(X_{\tau}^{(s)}\bigr)=\frac{1}{\,\pi\,}l(D_{\tau})>
\frac{1}{\,\pi\,}l\bigl(X_{\tau}^{(s)}\bigr)
$$
(cf.\ Maskit \cite[Proposition~1]{Maskit1985}). As holomorphic mappings 
decrease hyperbolic lengths, we obtain 
$\lambda\bigl(X_{\tau}^{(s)}\bigr)>(1/\pi)l(Y_{0})$, which implies 
$$
\lambda_{a}[Y_{0}] \geqq \frac{1}{\,\pi\,}l(Y_{0}).
$$

To show the opposite inequality we employ the annular covering surface $D_{0}$ 
of $R_{0}$ with respect to the loop $a_{0}$, where $Y_{0}=(R_{0},\chi_{0})$ and 
$\chi_{0}=\{a_{0},b_{0}\}$. For any $\varepsilon>0$ choose a doubly connected 
and relatively compact subdomain $D$ of $D_{0}$ with 
$\Gamma(D) \subset \Gamma(D_{0})$ so that 
$$
l(D)<l(D_{0})+\varepsilon=l(Y_{0})+\varepsilon.
$$
We further assume that the components of $\partial D$ are simple loops on 
$D_{0}$. Let $\pi_{0}:D_{0} \to R_{0}$ be the covering map. Since the closure 
$\overline{\pi_{0}(D)}$ is compact in $R_{0}$, we can find a simple loop on 
$R_{0}$ which is freely homotopic to $b_{0}$ and meets 
$R_{0} \setminus \overline{\pi_{0}(D)}$. Lifting the loop and deforming the 
lift, we obtain a simple arc $\tilde{b}$ on $D_{0}$ such that 
\begin{list}
{{\rm (\roman{claim})}}{\usecounter{claim}
\setlength{\topsep}{0pt}
\setlength{\itemsep}{0pt}
\setlength{\parsep}{0pt}
\setlength{\labelwidth}{\leftmargin}}
\item the end points of $\tilde{b}$ are projected to the same point by 
$\pi_{0}$, and the image loop $\pi_{0}(\tilde{b})$ is freely homotopic to 
$b_{0}$, 

\item the arc $\tilde{b}$ crosses $\tilde{a}_{0}$ once transversely, 

\item one of the end pints of $\tilde{b}$ is on $\partial D$ and the other lies 
outside of $D$, and 

\item the part $\tilde{b}':=\tilde{b} \setminus D$ is connected. 
\end{list}

We construct a marked once-holed torus $\tilde{X}=(\tilde{T},\tilde{\chi})$ 
belonging to $\mathfrak{T}_{a}[Y_{0}]$ as follows. We start with 
$D \cup \tilde{b}$. By identifying the end points of $\tilde{b}$ and thickening 
$\tilde{b}'$ slightly and appropriately, we obtain a once-holed torus 
$\tilde{T}$ so that $\pi_{0}$ induces a holomorphic mapping of $\tilde{T}$ into 
$R_{0}$. The curves $\tilde{a}_{0}$ and $\tilde{b}_{0}$ together make a mark 
$\tilde{\chi}$ of handle of $\tilde{T}$. It is obvious that 
$\tilde{X}=(\tilde{T},\tilde{\chi})$ is an element of 
$\mathfrak{T}_{a}[Y_{0}]$. As $D$ is a doubly connected domain on $\tilde{T}$ 
with $\Gamma(D) \subset \Gamma(\tilde{X})$, we have 
$$
\lambda_{a}[Y_{0}] \leqq \lambda(\tilde{X}) \leqq \lambda(D)=
\frac{1}{\,\pi\,}l(D)<\frac{1}{\,\pi\,}\{l(Y_{0})+\varepsilon\}.
$$
Since $\varepsilon$ is arbitrary, we deduce that 
$$
\lambda_{a}[Y_{0}] \leqq \frac{1}{\,\pi\,}l(Y_{0}),
$$
which completes the proof of Theorem~\ref{thm:critical_extremal:holomorphic}. 
\end{proof}

Next we evaluate the critical extremal length $\lambda_{c}[Y_{0}]$ for the 
existence of conformal mappings. The following theorem was announced in 
\cite{MasumotoPreprint}. 

\begin{thm}
\label{thm:critical_extremal:conformal}
$\lambda_{c}[Y_{0}]=\lambda(Y_{0})$. 
\end{thm}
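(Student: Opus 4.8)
The plan is to characterize $\lambda_{c}[Y_{0}]$ as an infimum of extremal lengths and then bound it from both sides, in close parallel with the proof of Theorem~\ref{thm:critical_extremal:holomorphic}. First I would record, from Proposition~\ref{prop:handle_condition:strip} applied to the handle condition $\mathcal{P}_{c}$, that since $\lambda\bigl(X_{\tau}^{(s)}\bigr)=1/\im\tau$ and $\mathbb{H}[\mathcal{P}_{c}]$ is squeezed between $\mathcal{H}(1/\lambda_{c}[Y_{0}])$ and its closure,
$$
\lambda_{c}[Y_{0}]=\inf\bigl\{\lambda(X) \mid X \in \mathfrak{T}_{c}[Y_{0}]\bigr\}.
$$
It then suffices to prove the two inequalities $\lambda_{c}[Y_{0}] \geqq \lambda(Y_{0})$ and $\lambda_{c}[Y_{0}] \leqq \lambda(Y_{0})$. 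Unlike the holomorphic case, I would \emph{not} pass to the handle covering surface, since $\mathfrak{T}_{c}[\tilde{Y}_{0}]$ and $\mathfrak{T}_{c}[Y_{0}]$ need not coincide; instead I work with $Y_{0}$ directly.

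For the lower bound, take any $X \in \mathfrak{T}_{c}[Y_{0}]$ and a conformal mapping $f:X \to Y_{0}$. Being handle-preserving, $f$ carries the loop $a$ of $X$ to a loop freely homotopic to $a_{0}$, so it sends $\Gamma(X)$ into $\Gamma(Y_{0})$. By the monotonicity of extremal length under a conformal embedding—pulling back a competing metric from $R_{0}$ to $X$ can only shrink the total area, while the length of every representative of $\Gamma(X)$ is at least that of its image in $\Gamma(Y_{0})$—one obtains $\lambda(X) \geqq \lambda(Y_{0})$. Taking the infimum over $X$ yields $\lambda_{c}[Y_{0}] \geqq \lambda(Y_{0})$.

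For the upper bound I would realize once-holed tori conformally inside $R_{0}$ whose extremal length is close to $\lambda(Y_{0})$. The key input is the classical identity that $\lambda(Y_{0})$, the extremal length of $\Gamma(Y_{0})$, equals the infimum of $\lambda(A)=1/\operatorname{mod}(A)$ over doubly connected domains $A \subset R_{0}$ whose core is freely homotopic to $a_{0}$; equivalently $1/\lambda(Y_{0})$ is the supremum of the moduli of such embedded annuli. Fix $\varepsilon>0$ and choose a relatively compact such $A$, with boundary consisting of simple loops, for which $\lambda(A)<\lambda(Y_{0})+\varepsilon$. Then, mirroring the construction in the proof of Theorem~\ref{thm:critical_extremal:holomorphic} but now inside $R_{0}$ itself, I would take a simple loop $\beta$ freely homotopic to $b_{0}$ that crosses the core of $A$ exactly once and otherwise lies in $R_{0}\setminus\overline{A}$, and thicken $A \cup \beta$ to obtain a once-holed torus $\tilde{X}\subset R_{0}$. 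Since $\tilde{X}$ is a subdomain of $R_{0}$, the inclusion is a handle-preserving conformal mapping, so $\tilde{X}\in\mathfrak{T}_{c}[Y_{0}]$; and since $A$ is a doubly connected subdomain of $\tilde{X}$ with $\Gamma(A)\subset\Gamma(\tilde{X})$, monotonicity gives $\lambda_{c}[Y_{0}]\leqq\lambda(\tilde{X})\leqq\lambda(A)<\lambda(Y_{0})+\varepsilon$. Letting $\varepsilon\to0$ yields $\lambda_{c}[Y_{0}]\leqq\lambda(Y_{0})$.

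The main obstacle is the upper bound, and within it the decisive point is that we are forced to use annuli embedded in $R_{0}$ \emph{itself}: because conformal mappings must be injective, we cannot descend through the covering projection as in the holomorphic case, so the relevant supremum of moduli is $1/\lambda(Y_{0})$ rather than the larger $\pi/l(Y_{0})=1/\lambda(D_{0})$ available on the annular cover—this is exactly what separates $\lambda_{c}[Y_{0}]$ from $\lambda_{a}[Y_{0}]$. I expect the two technical matters needing care to be (a) the existence of near-extremal embedded annuli homotopic to $a_{0}$ on a surface that may be non-compact or of infinite genus, where a Strebel-type extremal annulus need not exist outright and one argues by approximation, and (b) the topological handle attachment ensuring that the thickened configuration is genuinely an embedded once-holed torus with the correct marking; the latter is routine given the corresponding step in Theorem~\ref{thm:critical_extremal:holomorphic}.
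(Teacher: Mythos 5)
Your argument is correct, and its overall shape—establish $\lambda_{c}[Y_{0}]=\inf\{\lambda(X)\mid X\in\mathfrak{T}_{c}[Y_{0}]\}$ and prove the two inequalities, with the lower bound coming from monotonicity of extremal length under a handle-preserving conformal embedding—coincides with the paper's. The difference lies in how the upper bound is executed. The paper invokes the full Jenkins--Strebel machinery: it takes the \emph{unique extremal} annulus $\Delta_{0}$ with $\Gamma(\Delta_{0})\subset\Gamma(Y_{0})$ and $\lambda(\Delta_{0})=\lambda(Y_{0})$, which is dense in $R_{0}$ and swept by closed trajectories, realizes $R_{0}$ as a quotient of a closed annulus via the Jenkins--Suita boundary correspondence, normalizes $b_{0}'$ to meet a thin subannulus in a radial segment, and produces the embedded once-holed tori by fattening with the homeomorphisms $H_{\delta}$, letting $\delta\to\delta_{0}$. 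You instead use only the weaker fact that $1/\lambda(Y_{0})$ is the supremum of moduli of embedded annuli homotopic to $a_{0}$, choose a relatively compact near-extremal annulus $A$ with nice boundary, and take a regular neighborhood of $A\cup\beta$; this is softer and runs exactly parallel to the paper's own construction in the proof of Theorem~\ref{thm:critical_extremal:holomorphic}, which is an aesthetic gain. Two caveats: the nontrivial half of your ``classical identity'' (that embedded annuli of modulus arbitrarily close to $1/\lambda(Y_{0})$ exist at all, on a surface that may be open or of infinite genus) \emph{is} the Jenkins--Strebel existence theorem the paper cites, so you have not avoided that input, only its finer structure—and the cleanest way to produce your near-extremal relatively compact $A$ is in fact to take $F_{0}\bigl(A(\delta)\bigr)$ inside the extremal annulus, which is essentially what the paper does. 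Second, the isotopy reducing $\beta\cap A$ to a single essential crossing arc, and the verification that the regular neighborhood of $A\cup\beta$ is a once-holed torus correctly marked by the core of $A$ and $\beta$, should be spelled out at the level of conditions (i)--(iv) in the paper's proof of Theorem~\ref{thm:critical_extremal:holomorphic}; as you say, this is routine, and with it your proof is complete.
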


\begin{proof}
If there is a conformal mapping $f$ of a marked once-holed torus $X$ into 
$Y_{0}$, then the image family $f_{*}\bigl(\Gamma(X)\bigr)$ is included in 
$\Gamma(Y_{0})$. Since conformal mappings keep extremal lengths invariant, it 
follows that $\lambda(X) \geqq \lambda(Y_{0})$ and hence that 
$$
\lambda_{c}[Y_{0}] \geqq \lambda(Y_{0}).
$$

To show that the sign of equality actually occurs, we employ results on 
Jenkins-Strebel differentials, that is, holomorphic quadratic differentials 
with closed trajectories (see Strebel \cite[Chapter~5]{Strebel1984}). There 
uniquely exists a doubly connected domain $\Delta_{0}$ on $R_{0}$ such that 
$\Gamma(\Delta_{0}) \subset \Gamma(Y_{0})$ and 
$\lambda(\Delta_{0})=\lambda(Y_{0})$. It is dense in $R_{0}$ and is swept out 
by closed horizontal trajectories of a holomorphic quadratic differential on 
$R_{0}$. Let $\delta_{0}=\pi/\lambda(Y_{0})$ and $r_{0}=e^{\delta_{0}}$, and 
set $A(\delta)=\{z \in \mathbb{C} \mid e^{-\delta}r_{0}<|z|<e^{\delta}r_{0}\}$ 
for $\delta \in (0,\delta_{0}]$. Then there is a conformal mapping $F_{0}$ of 
the annulus $A_{0}:=A(\delta_{0})$ onto $\Delta_{0}$, which is continuously 
extended to a union $E_{0}$ of open arcs on $\partial A_{0}$. We assume that 
$E_{0}$ is maximal with this property. Thus $F_{0}$ is a continuous mapping of 
$A_{0} \cup E_{0}$ {\em onto} $R_{0}$, and $R_{0}$ is obtained from 
$A_{0} \cup E_{0}$ by identifying points on $E_{0}$ in the obvious manner (see 
Jenkins-Suita \cite[Corollary~1 to Theorem~2]{JS}). Let $a_{0}'$ be the loop on 
$R_{0}$ corresponding to the circle $|z|=r_{0}$; we orient it so that it is 
freely homotopic to $a_{0}$. Take a piecewise analytic simple loop $b_{0}'$ on 
$R_{0}$ freely homotopic to $b_{0}$ such that the intersection of 
$F_{0}^{-1}(b_{0}')$ with the closure of a narrow annulus $A':=A(\delta_{1})$ 
is a radial segment, where $\delta_{1}$ is a sufficiently small positive 
number. By thickening $b_{0}'$ we obtain a doubly connected domain $B'$ with 
$b_{0}'$ separating the boundary components of $B'$. We choose $B'$ so that the 
union $T':=F_{0}(A') \cup B'$ is a once-holed torus included in $R_{0}$. 
Obviously, $\chi':=\{a_{0}',b_{0}'\}$ is a mark of handle of $T'$ and the 
inclusion mapping $T' \to R_{0}$ is a conformal mapping of the marked 
once-holed torus $(T',\chi')$ into $Y_{0}$. For each 
$\delta \in (0,\delta_{0})$ take a homeomorphism $h_{\delta}$ of the interval 
$[1,r_{0}^{2}]=[1,e^{\delta_{0}}r_{0}]$ onto itself such that 
$h_{\delta}(1)=1$, $h_{\delta}(e^{-\delta_{1}}r_{0})=e^{-\delta}r_{0}$ and 
$h_{\delta}(e^{\delta_{1}}r_{0})=e^{\delta}r_{0}$. It induces a homeomorphism 
$H_{\delta}$ of $R_{0}$ onto itself satisfying 
$H_{\delta} \circ F_{0}(re^{i\theta})=
F_{0}\bigl(h_{\delta}(r)e^{i\theta}\bigr)$. Intuitively, $H_{\delta}$ fattens 
$F_{0}(A')$ if $\delta>\delta_{1}$. The marked once-holed torus 
$X_{\delta}':=\bigl(H_{\delta}(T'),\chi_{\delta}'\bigr)$, where 
$\chi_{\delta}'=\{a_{0}',H_{\delta}(b_{0}')\}$, is conformally embedded into 
$Y_{0}$. Thus $X_{\delta}' \in \mathfrak{T}_{c}[Y_{0}]$. Since $H_{\delta}(T')$ 
includes $F_{0}\bigl(A(\delta)\bigr)$, we have 
$$
\lambda_{c}[Y_{0}] \leqq \lambda(X_{\delta}') \leqq \lambda\bigl(A(\delta)\bigr)
=\frac{\pi}{\,\delta\,}.
$$
Letting $\delta \to \delta_{0}$, we obtain 
$$
\lambda_{c}[Y_{0}] \leqq \frac{\pi}{\,\delta_{0}\,}=\lambda(Y_{0}).
$$
This completes the proof. 
\end{proof}

Theorems~\ref{thm:critical_extremal:holomorphic} 
and~\ref{thm:critical_extremal:conformal} give a simple alternative proof of 
one of our previous results. The next corollary implies that 
$\mathfrak{T}_{a}[Y_{0}] \setminus \mathfrak{T}_{c}[Y_{0}]$ has a nonempty 
interior since $\mathfrak{T}_{a}[Y_{0}]$ and $\mathfrak{T}_{a}[Y_{0}]$ are 
closed domains with Lipschitz boundary. 

\begin{cor}[\mbox{\cite[Theorem~8]{MasumotoPreprint}}]
\label{cor:compare:holomorphic&conformal}
$\lambda_{a}[Y_{0}]<\lambda_{c}[Y_{0}]$. 
\end{cor}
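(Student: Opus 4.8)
The plan is to combine the two formulas just proved and reduce the corollary to a single length comparison. By Theorems~\ref{thm:critical_extremal:holomorphic} and~\ref{thm:critical_extremal:conformal} we have $\lambda_{a}[Y_{0}]=\frac{1}{\pi}l(Y_{0})$ and $\lambda_{c}[Y_{0}]=\lambda(Y_{0})$, so the assertion $\lambda_{a}[Y_{0}]<\lambda_{c}[Y_{0}]$ is equivalent to the strict inequality
\[
\frac{1}{\,\pi\,}l(Y_{0})<\lambda(Y_{0})
\]
between the hyperbolic length and the extremal length of the class $\Gamma(Y_{0})$ of $a_{0}$. Everything therefore reduces to establishing this one inequality, which is a purely intrinsic statement about $Y_{0}$ requiring neither once-holed tori nor mappings.

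First I would dispose of the case in which $Y_{0}$ is a marked torus: then $l(Y_{0})=0$ by definition while $\lambda(Y_{0})>0$ (indeed $\lambda(X_{\tau})=1/\im\tau$), so the inequality is immediate, consistently with $\lambda_{a}[Y_{0}]=0$. Assume now that $Y_{0}$ is not a marked torus, so that $R_{0}$ carries a hyperbolic metric and both $l(Y_{0})$ and $\lambda(Y_{0})$ are positive. I would then invoke the Jenkins--Strebel ring domain $\Delta_{0}$ already constructed in the proof of Theorem~\ref{thm:critical_extremal:conformal}: it is an essential annulus embedded in $R_{0}$ with $\Gamma(\Delta_{0})\subset\Gamma(Y_{0})$ and $\lambda(\Delta_{0})=\lambda(Y_{0})$. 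Being a proper subdomain of the hyperbolic surface $R_{0}$, it is itself hyperbolic of finite modulus, so it has a core geodesic $c$ whose hyperbolic length in $\Delta_{0}$ equals $l(\Delta_{0})$. By the strong Schwarz--Pick lemma the hyperbolic metric of $\Delta_{0}$ strictly dominates the restriction of that of $R_{0}$, whence
\[
l(\Delta_{0})=\ell_{\Delta_{0}}(c)>\ell_{R_{0}}(c)\geqq l(Y_{0}),
\]
the last inequality because $c$ lies in $\Gamma(\Delta_{0})\subset\Gamma(Y_{0})$. Combining this with~\eqref{eq:hyperbolic-extremal:annulus} applied to the annulus $\Delta_{0}$ gives $\lambda(Y_{0})=\lambda(\Delta_{0})=\frac{1}{\pi}l(\Delta_{0})>\frac{1}{\pi}l(Y_{0})$, as desired.

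The main obstacle is the strictness rather than the inequality itself: the weak comparison $\frac{1}{\pi}l(Y_{0})\leqq\lambda(Y_{0})$ is routine, and the whole point is that $\Delta_{0}$ is a \emph{proper} subdomain of $R_{0}$. This holds precisely because $R_{0}$ has positive genus and so is never doubly connected, giving $R_{0}\setminus\Delta_{0}\neq\varnothing$; the strong Schwarz--Pick lemma then yields strict domination at every point of $c$, and this works even though $R_{0}\setminus\Delta_{0}$ is only a critical graph of measure zero. Alternatively, one could lift the embedding $\Delta_{0}\hookrightarrow R_{0}$ to the annular covering surface $D_{0}$ of $R_{0}$ with respect to $a_{0}$ --- for which $\lambda(D_{0})=\frac{1}{\pi}l(Y_{0})$ by~\eqref{eq:hyperbolic-extremal:annulus} and~\eqref{eq:annular_covering:hyperbolic} --- and read off $\lambda(\Delta_{0})>\lambda(D_{0})$ from the monotonicity of extremal length under essential inclusion of ring domains; here again the strictness is the delicate point, and it is most safely secured through the metric comparison above, or via Maskit \cite{Maskit1985}.
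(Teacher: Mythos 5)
Your proof is correct and takes essentially the same route as the paper's: both combine Theorems~\ref{thm:critical_extremal:holomorphic} and~\ref{thm:critical_extremal:conformal} and reduce the claim to $l(Y_{0})<l(\Delta_{0})=\pi\lambda(\Delta_{0})=\pi\lambda(Y_{0})$ for the Jenkins--Strebel ring domain $\Delta_{0}$, with strictness coming from $\Delta_{0}$ being a proper subdomain of $R_{0}$. You merely make explicit the strict Schwarz--Pick comparison and the (trivial) marked-torus case, which the paper leaves implicit.
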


\begin{proof}
Let $\Delta_{0}$ be the doubly connected domain as in the proof of 
Theorem~\ref{thm:critical_extremal:conformal}. Then 
$$
\lambda_{a}[Y_{0}]=\frac{1}{\,\pi\,}l(Y_{0})<\frac{1}{\,\pi\,}l(\Delta_{0})=
\lambda(\Delta_{0})=\lambda(Y_{0})=\lambda_{c}[Y_{0}],
$$
where the inequality follows from the fact that $\Delta_{0}$ is a proper 
subdomain of $R_{0}$. 
\end{proof}

Let $\mathfrak{T}_{\infty}[Y_{0}]$ be the set of $X \in \mathfrak{T}$ such that 
there is a holomorphic mapping $f:X \to Y_{0}$ with $d(f)<\infty$. Again, 
$\Pi \circ \Sigma(\mathfrak{T}_{\infty}[Y_{0}])$ is a horizontal strip. In 
fact, there is a nonnegative number $\lambda_{\infty}[Y_{0}]$ such that 
\begin{list}
{{\rm (\roman{claim})}}{\usecounter{claim}
\setlength{\itemsep}{0pt}
\setlength{\parsep}{0pt}
\setlength{\labelwidth}{\leftmargin}}
\item if $\im\tau \geqq 1/\lambda_{\infty}[Y_{0}]$, then 
$X_{\tau}^{(s)} \not\in \mathfrak{T}_{\infty}[Y_{0}]$ for any $s \in [0,1)$, 
while 

\item if $\im\tau<1/\lambda_{\infty}[Y_{0}]$, then 
$X_{\tau}^{(s)} \in \mathfrak{T}_{\infty}[Y_{0}]$  for some $s \in [0,1)$ 
\end{list}
(see \cite[Theorem~4]{MasumotoPreprint}). 
Theorem~\ref{thm:critical_extremal:holomorphic} together 
with \cite[Theorem~2]{Masumoto2013} yields the following identity: 

\begin{cor}
\label{cor:holomorphic:finite}
$\lambda_{\infty}[Y_{0}]=\dfrac{1}{\,\pi\,}l(Y_{0})$. 
\end{cor}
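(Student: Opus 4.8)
The plan is to combine Theorem~\ref{thm:critical_extremal:holomorphic}, which gives $\lambda_{a}[Y_{0}]=(1/\pi)l(Y_{0})$, with the cited characterization \cite[Theorem~2]{Masumoto2013} of the critical extremal length $\lambda_{\infty}[Y_{0}]$ governing holomorphic mappings of finite degree. The key observation is that the two critical extremal lengths $\lambda_{a}[Y_{0}]$ and $\lambda_{\infty}[Y_{0}]$ are determined by the same strip-inclusion data of Proposition~\ref{prop:handle_condition:strip}, applied to the handle conditions $\mathcal{P}_{a}$ and $\mathcal{P}_{\infty}(X):=$ ``there is a holomorphic mapping $f:X \to Y_{0}$ with $d(f)<\infty$'' respectively, so establishing the corollary amounts to identifying these two constants.

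First I would note the trivial inclusion $\mathfrak{T}_{\infty}[Y_{0}] \subset \mathfrak{T}_{a}[Y_{0}]$, which by the monotonicity built into the strip description (Proposition~\ref{prop:handle_condition:strip}) immediately yields the inequality $\lambda_{\infty}[Y_{0}] \geqq \lambda_{a}[Y_{0}]=(1/\pi)l(Y_{0})$. For the reverse inequality I would revisit the construction in the proof of Theorem~\ref{thm:critical_extremal:holomorphic}: there, for each $\varepsilon>0$, a marked once-holed torus $\tilde{X}=(\tilde{T},\tilde{\chi})$ was produced in $\mathfrak{T}_{a}[Y_{0}]$ with $\lambda(\tilde{X}) \leqq (1/\pi)\{l(Y_{0})+\varepsilon\}$. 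The crucial point is that the mapping $\pi_{0}$ induced on $\tilde{T}$ is a holomorphic mapping into $R_{0}$ arising from a \emph{relatively compact} doubly connected subdomain $D$ of the annular covering surface together with a single thickened arc $\tilde{b}'$; such a map should have finite degree, so in fact $\tilde{X} \in \mathfrak{T}_{\infty}[Y_{0}]$.

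Thus the same approximating tori witness the bound $\lambda_{\infty}[Y_{0}] \leqq (1/\pi)\{l(Y_{0})+\varepsilon\}$ for every $\varepsilon>0$, giving $\lambda_{\infty}[Y_{0}] \leqq (1/\pi)l(Y_{0})$ and hence equality. I expect the main obstacle to be verifying rigorously that the induced holomorphic mapping $\tilde{T} \to R_{0}$ has finite degree; this is where \cite[Theorem~2]{Masumoto2013} enters, since that result presumably characterizes when such finite-degree holomorphic mappings exist and allows one to conclude that the critical value for $\mathcal{P}_{\infty}$ coincides with that for $\mathcal{P}_{a}$. Once finiteness of the degree is secured, the two-sided estimate closes immediately and the corollary follows.
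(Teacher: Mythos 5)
Your proposal is correct in substance but follows a genuinely different route from the paper. The paper's proof is a one-line citation: it invokes \cite[Theorem~2]{Masumoto2013}, which identifies $\lambda_{\infty}[Y_{0}]$ with $\lambda_{a}[Y_{0}]$, and then applies Theorem~\ref{thm:critical_extremal:holomorphic}. You instead re-derive the upper bound $\lambda_{\infty}[Y_{0}] \leqq (1/\pi)l(Y_{0})$ by observing that the witness tori $\tilde{X}$ constructed in the proof of Theorem~\ref{thm:critical_extremal:holomorphic} already lie in $\mathfrak{T}_{\infty}[Y_{0}]$; combined with the trivial inclusion $\mathfrak{T}_{\infty}[Y_{0}] \subset \mathfrak{T}_{a}[Y_{0}]$ and the monotonicity of the strips in Proposition~\ref{prop:handle_condition:strip}, this closes the argument without needing the full strength of the external identity $\lambda_{\infty}=\lambda_{a}$. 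Your key claim is right: since $D \cup \tilde{b}$ is relatively compact in the annular covering surface $D_{0}$ and $\pi_{0}:D_{0} \to R_{0}$ is a covering map, a finite cover of $\overline{D \cup \tilde{b}}$ by evenly covered sheets bounds $\#\pi_{0}^{-1}(p) \cap \overline{D \cup \tilde{b}}$ uniformly, so the induced map $\tilde{T} \to R_{0}$ has $d<\infty$. Note, however, that this is an elementary compactness fact and is \emph{not} where \cite[Theorem~2]{Masumoto2013} enters --- that citation is precisely the shortcut your argument renders unnecessary, so your closing paragraph slightly misplaces its role. Two small points to tidy up: you should check that $\mathcal{P}_{\infty}$ is a handle condition (immediate, since precomposition with a conformal injection does not increase $d(f)$), so that $\lambda_{\infty}[Y_{0}]$ is governed by Proposition~\ref{prop:handle_condition:strip}; and the case where $Y_{0}$ is a marked torus, which the proof of Theorem~\ref{thm:critical_extremal:holomorphic} disposes of separately via $\mathfrak{T}_{a}[Y_{0}]=\mathfrak{T}$, needs its own treatment here, since $\mathfrak{T}_{a}[Y_{0}]=\mathfrak{T}$ does not immediately yield $\lambda_{\infty}[Y_{0}]=0$; fortunately your construction still works with $D_{0}$ the annular cover $\mathbb{C}^{*}$ of the torus and $l(D_{0})=0$. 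The payoff of your approach is a self-contained proof; the cost is these extra verifications, which the paper avoids by outsourcing everything to the earlier reference.
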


Proposition~\ref{prop:holomorphic:critical_extremal} shows that the horizontal 
strip $\Pi \circ \Sigma(\mathfrak{T}_{a}[Y_{0}])$ never meets the critical 
horizontal line $\im\tau=1/\lambda_{a}[Y_{0}]$. In other words, if 
$\im\tau=1/\lambda_{a}[Y_{0}]$, then $X_{\tau}^{(s)}$ does not belong to 
$\mathfrak{T}_{a}[Y_{0}]$ for any $s \in [0,1)$ (see 
\cite[Theorem~6]{MasumotoPreprint}). This is not always the case for the 
critical extremal lengths for the existence of conformal mappings. In fact, if 
$Y_{0}$ is a marked once-holed torus, then the strip 
$\Pi \circ \Sigma(\mathfrak{T}_{c}[Y_{0}])$ and the line 
$\im\tau=1/\lambda_{c}[Y_{0}]$ intersect precisely at one point: there uniquely 
exists $\tau \in \mathbb{H}$ with $\im\tau=1/\lambda_{c}[Y_{0}]$ such that 
$X_{\tau}^{(s)}$ belongs to $\mathfrak{T}_{c}[Y_{0}]$ for some $s \in [0,1)$. 

We show that there is also a Riemann surface $Y_{0}$ such that 
$\Pi \circ \Sigma(\mathfrak{T}_{c}[Y_{0}])$ does not meet the horizontal line 
$\im\tau=1/\lambda_{c}[Y_{0}]$. To construct an example we give a preparatory 
consideration. Let $Y_{0}=(R_{0},\chi_{0})$, where 
$\chi_{0}=(a_{0},b_{0})$, be a Riemann surface with marked handle which is not 
a marked torus, and let $\Delta_{0}$ be the (unique) doubly connected domain on 
$R_{0}$ with $\Gamma(\Delta_{0}) \subset \Gamma(Y_{0})$ and 
$\lambda(\Delta_{0})=\lambda(Y_{0})$. Suppose that there is a conformal mapping 
$f$ of a marked once-holed torus $X_{\tau}^{(s)}$ with 
$\im\tau=1/\lambda_{c}[Y_{0}]$ into $Y_{0}$. Since 
$\lambda\bigl(f_{*}(D_{\tau})\bigr)=\lambda_{c}[Y_{0}]=\lambda(Y_{0})$, we 
obtain $f(D_{\tau})=\Delta_{0}$ by uniqueness. The horizontal arc 
$T_{\tau}^{(s)} \setminus D_{\tau}$ is mapped onto an arc $\gamma_{0}$ on the 
boundary $\partial\Delta_{0}$, and $f\bigl(T_{\tau}^{(s)}\bigr)$ is identical 
with $\Delta_{0} \cup \gamma_{0}$. This imposes a condition on $b_{0}$, for, it 
is freely homotopic to $f_{*}\bigl(b_{\tau}^{(s)}\bigr)$ on $R_{0}$. 

\begin{exmp}
\label{exmp:conformal:meet}
Set $R_{0}=T_{\tau_{0}} \setminus \pi_{\tau_{0}}(\{0,1/2\})$, which is a 
twice-punctured torus. Let $a_{0}$ and $b_{0}$ be the projections of the 
segments $[\tau_{0}/2,1+\tau_{0}/2]$ and $[3/4,3/4+\tau_{0}]$, respectively. 
They are simple loops on $R_{0}$, and make a mark $\chi_{0}$ of handle of 
$R_{0}$. Let $Y_{0}=(R_{0},\chi_{0})$. Then 
$\lambda_{c}[Y_{0}]=\lambda(Y_{0})=1/\im\tau_{0}$. Since 
$X_{\tau_{0}}^{(1/2)}$ belongs to $\mathfrak{T}_{c}[Y_{0}]$, the strip 
$\Pi \circ \Sigma(\mathfrak{T}_{c}[Y_{0}])$ and the line 
$\im\tau=1/\lambda_{c}[Y_{0}]$ meet at $\tau_{0}$. 
\end{exmp}

\begin{exmp}
\label{exmp:conformal:not_meet}
Let $R_{0}$ and $a_{0}$ be as in the preceding example. Let $b_{0}'$ be the 
projection of the polygonal arc obtained by joining the segments 
$[-1/4,\tau_{0}/4]$, $[\tau_{0}/4,1/2-\tau_{0}/4]$ and 
$[1/2-\tau_{0}/4,3/4+\tau_{0}]$. Set $\chi_{0}'=\{a_{0},b_{0}'\}$ and 
$Y_{0}'=(R_{0},\chi_{0}')$. Again, we have $\lambda_{c}[Y_{0}']=1/\im\tau_{0}$. 
However,  $\Pi \circ \Sigma(\mathfrak{T}_{c}[Y_{0}'])$ does not meet the 
critical horizontal line $\im\tau=1/\lambda_{c}[Y_{0}']$. 
\end{exmp}


\end{document}